\documentclass[12pt]{amsart}
\textwidth=125mm
\textheight=195mm
\usepackage{amsfonts,amscd,latexsym,amsmath,amssymb,cancel,enumerate,mathrsfs,mathabx}
\usepackage[unicode]{hyperref}
\theoremstyle{plain}
\newtheorem{master}{Master}[section]
\newtheorem{prop}[master]{Proposition}
\newtheorem{thm}[master]{Theorem}
\newtheorem{fact}[master]{Fact}
\newtheorem{lem}[master]{Lemma}

\newtheorem{question}[master]{Question}
\newtheorem{claim}[master]{Claim}
\theoremstyle{definition}
\newtheorem{defin}[master]{Definition}
\newtheorem{constr}[master]{Construction}
\newtheorem{observation}[master]{Observation}
\newtheorem{example}[master]{Example}
\theoremstyle{remark}
\newtheorem{remark}[master]{Remark}
\numberwithin{equation}{section}

\newcommand{\Rea}{\mathbb{R}}
\newcommand{\Nat}{\mathbb{N}}
\newcommand{\Rat}{\mathbb{Q}}

\begin{document}
\title[Metric approximation of groups and their ultraproducts]{Metric topological groups: their metric approximation and metric ultraproducts}
\author[M. Doucha]{Michal Doucha}
\address{Laboratoire de Math\' ematiques de Besan\c con\\Universit\' e de Franche-Comt\' e\\France} 
\address[current address:]{Institute of Mathematics CAS, \v Zitn\' a 25, 115 67 Prague, Czech Republic}
\email{doucha@math.cas.cz}
\begin{abstract}
We define a metric ultraproduct of topological groups with left-invariant metric, and show that there is a countable sequence of finite groups with left-invariant metric whose metric ultraproduct contains isometrically as a subgroup every separable topological group with left-invariant metric.

In particular, there is a countable sequence of finite groups with left-invariant metric such that every finite subset of an arbitrary topological group with left-invariant metric may be approximated by all but finitely many of them.

We compare our results with related concepts such as sofic groups, hyperlinear groups and weakly sofic groups.
\end{abstract}
\keywords{metric approximation, left-invariant metric, metric ultraproducts, sofic groups, weakly sofic groups}
\subjclass[2010]{Primary: 22A05,20F65; Secondary: 03C20,46M07}
\maketitle
\section*{Introduction}
It is a major open problem whether all discrete groups are sofic, i.e. whether all discrete groups can be metrically approximated, in a certain sense, by finite permutation groups with the Hamming distance. On the other hand, when one wants to approximate metric groups, say with bi-invariant distance, it is clear finite permutation groups with the Hamming distance cannot serve for that purpose, e.g. the group of integers with the standard metric cannot be approximated by them. Since the introduction of sofic groups, many other classes of groups, defined in a similar manner as groups metrically approximable by certain class of `basic metric groups', appeared in the literature. Most notably the hyperlinear groups, formally introduced by R\v adulescu in \cite{Rad}, that are directly connected to the Connes' embedding conjecture for group von Neumann algebras (\cite{Con}). However, let us also mention linearly sofic groups introduced by Arzhantseva and P\v aunescu in \cite{ArPa}, $F_c$-approximable groups introduced by Thom in \cite{Thom}, and weakly sofic groups introduced by Glebsky and Rivera in \cite{GlRi} (see also \cite{Gl}). Thom in \cite{Thom} showed that the Higman's group is not $F_c$-approximable, however for all other classes this is unknown.

In this paper, we consider metric approximation by finite groups with left-invariant metrics (that do not have to be bi-invariant). We shall show that in this case we can prove a positive result. 
\begin{thm}\label{main1}
There exists a countable sequence $(G_n)_n$ of finite groups with left-invariant metric such that any finite subset of any topological group with left-invariant metric can be metrically approximated by all but finitely many $G_n$'s.
\end{thm}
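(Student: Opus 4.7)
The plan is to reduce the theorem to the construction of a single countable universal object. Since every finite subset of a topological group generates a separable subgroup, and the restriction of a left-invariant metric remains left-invariant, it is enough to prove the approximation claim for separable groups. I would aim to produce a countable group $H$ with a rational-valued left-invariant metric such that (a) $H$ is an increasing union of a chain $G_1 \leq G_2 \leq \cdots$ of finite subgroups on which the metric restricts to a left-invariant metric, and (b) every separable topological group with left-invariant metric embeds isometrically, as an abstract group, into the metric completion $\bar H$. Granted (a) and (b), Theorem~\ref{main1} follows immediately: given a finite set $F$ in a topological group with left-invariant metric and $\varepsilon > 0$, use (b) to view $F$ inside $\bar H$, approximate each element of $F$ to within $\varepsilon/2$ by an element of the dense subgroup $H \subseteq \bar H$, and observe that all these approximants already live in $G_n$ once $n$ is large enough; the resulting map $F \to G_n$ is an $\varepsilon$-approximation of both the metric and the partial group operation because $G_n$ is a genuine subgroup of $\bar H$.

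To produce $H$ I would run a Fra\"iss\'e-style construction in the category whose objects are finite groups equipped with rational-valued left-invariant metric and whose morphisms are isometric group embeddings. The joint embedding property is easy: given $A$ and $B$ in the class, the direct product $A \times B$ with the max metric isometrically contains both factors. The amalgamation property is the technical heart: given $A, B$ in the class sharing an isometric copy of a subgroup $C$, one needs a \emph{finite} metric group containing $A$ and $B$ isometrically over $C$. I would first form the amalgamated free product $A *_C B$ equipped with the left-invariant length obtained by infimising over all expressions as products of factors from $A$ and $B$, using Bass--Serre normal forms to verify that the inclusions of $A$ and $B$ remain isometric. Since $A *_C B$ is infinite, one then passes to a finite quotient: combining residual finiteness of $A *_C B$ with intersecting enough finite-index normal subgroups yields a normal subgroup $N$ of finite index such that every non-identity element of $N$ has length much larger than the diameter of $A \cup B$, and a short triangle-inequality computation confirms that the induced left-invariant metric on $(A *_C B)/N$ still restricts isometrically to the images of $A$ and $B$. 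Crucial here is the freedom to use \emph{any} left-invariant metric rather than a bi-invariant one; the analogous quotient distorts conjugates in the bi-invariant setting, which is precisely the obstruction that keeps the sofic question open.

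With amalgamation in hand, the Fra\"iss\'e limit $H$ is ultrahomogeneous and comes with a canonical exhausting chain of finite subgroups, delivering (a). For (b), given a separable $(G, d)$ with countable dense subgroup $D$, one approximates each finite symmetric ball of $D$ arbitrarily well by finite groups in the Fra\"iss\'e class and uses the extension property of $H$ to build, by a back-and-forth argument, an isometric group embedding $D \hookrightarrow H$ that extends by continuity to the required embedding $G \hookrightarrow \bar H$. The main obstacle is the amalgamation step; everything else is standard Fra\"iss\'e theory plus a density/completion argument, and the same chain $(G_n)$ simultaneously witnesses the metric-ultraproduct universality statement advertised in the abstract.
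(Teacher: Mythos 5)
Your reduction of the theorem to a universal separable object has one fatal gap: nothing in the Fra\"iss\'e construction controls the \emph{moduli of continuity} of conjugation, i.e.\ the continuity of the norm on the limit. Each finite group in your class is discrete, so its norm is trivially a continuous norm; but when you pass from $G_n$ to $G_{n+1},G_{n+2},\dots$ the modulus of continuity of a fixed element $x\in G_n$ can degrade without bound, and in the union $H=\bigcup_n G_n$ the map $y\mapsto\lambda(x^{-1}yx)$ need not be continuous at $1_H$. Worse, a Fra\"iss\'e limit is \emph{generic}, so it realizes every amalgam the class permits, including ones where conjugation by $x$ sends elements of arbitrarily small norm to elements of norm bounded away from $0$ (this is exactly the phenomenon behind the paper's example of a sequence of normed free groups whose metric ultraproduct collapses to the trivial group). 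Consequently $H$ is not a topological group in its norm topology, its metric completion is not a group (even for continuous norms the metric completion is only a semigroup; one needs the Ra\v ikov completion, Fact \ref{obs2}), and both places where you invoke $\bar{H}$ break down: the extension of $D\hookrightarrow H$ by continuity to $G\hookrightarrow\bar{H}$, and the estimate of $d(\phi(g)\phi(h),\phi(gh))$, which uses continuity of right multiplication and hence a modulus of continuity for $\phi(h)$ in $\bar{H}$. This is precisely what the paper's technical apparatus exists for: Proposition \ref{prop1} embeds normed free groups into free products while only doubling the minimal moduli of the generators, Proposition \ref{prop2} preserves those moduli under passage to finite quotients, and the ultraproduct is cut down to the elements that are ``continuous in the ultraproduct.''

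Two further points. First, your claim (b) --- a single \emph{separable} completion $\bar{H}$ containing every separable normed topological group isometrically --- is substantially stronger than Theorem \ref{main2}, which only produces a non-separable metric ultraproduct; the sketch does not come close to establishing it, and your back-and-forth is missing a step in any case: a ball of the dense subgroup $D$ is a finite partial subgroup of an infinite group, not a member of your class, so before the extension property of $H$ can be applied one must realize a rational perturbation of that partial normed group inside a genuine finite normed group. That is the content of Observation \ref{obs1} combined with Proposition \ref{prop2} (``metric residual finiteness'') and is where the real work lies. Second, the step you single out as the technical heart --- amalgamation --- is actually the least problematic part: the infimized length on $A\ast_C B$ does restrict to the given norms on $A$ and $B$ (merge adjacent same-side factors of any factorization and apply the normal form theorem), and the quotient by a finite-index normal subgroup avoiding a large ball behaves as you describe. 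The paper avoids amalgamation altogether by working with plain free products of free groups, at the price of the modulus-of-continuity bookkeeping described above.
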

We refer to the last section, where the theorem is proved, for a precise formulation and definition of approximation.

It is common in the area of group approximations to work with metric ultraproducts of metric groups. Indeed, being $\mathcal{C}$-approximable for a certain class $\mathcal{C}$ of metric groups (with bi-invariant metric) is equivalent with being embeddable as a subgroup into a metric ultraproduct of groups from $\mathcal{C}$. Metric ultraproducts of metric groups have been defined only for groups with bi-invariant metric. Here we generalize the notion and define a metric ultraproduct of arbitrary topological groups with left-invariant metric and obtain the following theorem.
\begin{thm}\label{main2}
There exists a countable sequence $(G_n)_n$ of finite groups with left-invariant metric whose metric ultraproduct contains isometrically an arbitrary separable topological group with left-invariant metric.
\end{thm}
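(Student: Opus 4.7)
The plan is to deduce Theorem \ref{main2} from Theorem \ref{main1} by a standard ultrafilter-limit / diagonalization argument, adapted to the left-invariant setting. Let $G$ be a separable topological group with left-invariant metric $d$ and fix a countable dense subset $D = \{g_0, g_1, \ldots\}$ with $g_0 = e$. For each $n \in \Nat$, form a finite subset $F_n \subseteq G$ containing $g_0,\ldots,g_n$ and closed enough under products and inverses of these elements to detect almost-multiplicativity, and pick a tolerance sequence $\varepsilon_n \to 0$.

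By Theorem \ref{main1}, for each $n$ and for all but finitely many indices $m$, there is a partial map $\phi_{n,m}\colon F_n \to G_m$ that is an $\varepsilon_n$-approximation, meaning $|d_{G_m}(\phi_{n,m}(x),\phi_{n,m}(y)) - d(x,y)| < \varepsilon_n$ for $x,y \in F_n$ and $d_{G_m}(\phi_{n,m}(xy),\phi_{n,m}(x)\phi_{n,m}(y)) < \varepsilon_n$ whenever $x,y,xy \in F_n$. A diagonal choice produces $n(m) \to \infty$ and maps $\phi_m := \phi_{n(m),m}$ on $F_{n(m)}$ whose error tends to $0$.

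Fix a non-principal ultrafilter $\mathcal{U}$ on $\Nat$. Each $g_i$ determines an element $[g_i^*]$ of the metric ultraproduct $\prod_{\mathcal{U}} G_m$ via the sequence $(\phi_m(g_i))_m$ (set arbitrarily on the finite exceptional set). The approximation estimates give exactly in the limit
\[
d_\mathcal{U}([g_i^*],[g_j^*]) = \lim_\mathcal{U} d_{G_m}(\phi_m(g_i),\phi_m(g_j)) = d(g_i,g_j),
\]
and analogously $[g_i^*]\cdot [g_j^*] = [(g_i g_j)^*]$. Hence $g_i \mapsto [g_i^*]$ is an isometric group homomorphism from $D$ into $\prod_\mathcal{U} G_m$. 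Since the metric ultraproduct defined earlier in the paper is a complete topological group with left-invariant metric and $D$ is dense in $G$, this map extends uniquely to an isometric embedding $G \hookrightarrow \prod_\mathcal{U} G_m$ of topological groups, proving Theorem \ref{main2}.

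The main technical obstacle, relative to the classical bi-invariant case, is that a naive pointwise ultraproduct with a left-invariant metric need not give a well-defined topological group: right-multiplication by a fixed element could be wildly discontinuous along $\mathcal{U}$, so one must verify that the elements $[g_i^*]$ produced above actually live in the correct subobject on which the group operation is jointly continuous (the ``well-behaved'' part that forms the metric ultraproduct in the sense of the paper). Once this is in place, together with uniform continuity of the approximating maps on bounded sets coming from left-invariance, the extension from $D$ to $G$ and the verification that the extended map is a homomorphism are routine.
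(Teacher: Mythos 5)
Your proposal has a genuine gap, and it sits exactly where you flag it and then wave it away. You propose to deduce Theorem \ref{main2} from Theorem \ref{main1} by the standard ``approximable $\Rightarrow$ embeds into the metric ultraproduct'' argument. The paper's logical order is the reverse: Theorem \ref{main2} is proved first (via an explicit construction of the sequence $(G_n)_n$ using free products, Propositions \ref{prop1} and \ref{prop2}, and careful bookkeeping of moduli of continuity), and Theorem \ref{main1} is then derived from it by a \L o\'s-type argument. So, relative to the paper, your deduction is circular; but more importantly, the implication you rely on is false in general for left-invariant metrics. The metric ultraproduct here is only defined on the subgroup $(G_n)_\mathcal{C}$ of elements continuous in the ultraproduct, i.e.\ sequences admitting a modulus of continuity that survives the ultralimit. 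Theorem \ref{main1} as stated gives $\varepsilon$-homomorphisms with no control whatsoever on the moduli of continuity of the images $\phi_m(g_i)$ in $G_m$, so there is no reason the diagonal sequences $(\phi_m(g_i))_m$ should lie in $(G_n)_\mathcal{C}$; if they do not, $[g_i^*]$ is simply not an element of the metric ultraproduct. The paper's pathological example (free groups with the word metric rescaled by $1/n$) shows this failure is real: every finite subset of every normed group is trivially approximated by those groups in a crude sense, yet their metric ultraproduct is the trivial group. Your last paragraph names this obstacle but offers no mechanism to overcome it; overcoming it is precisely the content of the paper's Construction \ref{main_constr}, where the norms on the approximating finite groups are engineered so that the images of a fixed group element have a \emph{uniform} MOC (namely $2\Gamma^H_f$), which is what places the limit sequence inside $(G_n)_\mathcal{C}$.

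A secondary, smaller issue: your extension step from the dense subgroup $D$ to all of $G$ invokes completeness of the ultraproduct, but the ultraproduct is only Ra\v ikov metrically complete (left-invariant metric completions need not be groups). The paper handles this by choosing the generating sequence $(e_n)_n$ so that the Ra\v ikov metric completion of $\langle e_n\rangle$ contains $E$, and by noting that an isometric embedding of normed groups sends Cauchy-with-Cauchy-inverses sequences to the same. This part is repairable along the lines you sketch, but the MOC issue above is not repairable from Theorem \ref{main1} alone; you would need at least the ``moreover'' clause of Theorem \ref{main1_restate} together with a proof of that clause that does not already pass through the ultraproduct embedding.
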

We note that although being $\mathcal{C}$-approximable and being embeddable into metric ultraproduct of groups from $\mathcal{C}$ is rather easily checked to be equivalent when $\mathcal{C}$ contains just groups with bi-invariant metric, it is not the case in our general situation. The proof of Theorem \ref{main2} is substantially more involved than the proof of Theorem \ref{main1}. Indeed, the tricky issue with metric ultraproducts of groups with left-invariant metric is that in some cases the ultraproduct collapses to a trivial group, so one has to choose the sequence $(G_n)_n$ carefully.
\section{Definitions and preliminaries}
\subsection{Norms and metrics on groups}
Let $G$ be a group. A \emph{norm} (or a \emph{length function}) on $G$ is a function $\lambda:G\rightarrow \Rea_0^+$ with the following properties:
\begin{itemize}
\item $\lambda(x)=\lambda(x^{-1})$ for every $x\in G$,
\item $\lambda(x\cdot y)\leq \lambda(x)+\lambda(y)$ for every $x,y\in G$,
\item $\lambda(x)=0$ iff $x=1_G$.
\end{itemize}
$\lambda$ satisfying only the right-to-left implication of the last condition is called a \emph{seminorm}.

A (semi)norm $\lambda$ on $G$ satisfying $\lambda(g^{-1}\cdot h\cdot g)=\lambda(h)$ for every $g,h\in G$ is called \emph{conjugacy-invariant}.

Recall that a (pseudo)metric $d$ on the group $G$ is \emph{left-invariant} if $d(g\cdot x,g\cdot y)=d(x,y)$ for every $g,x,y\in G$. Right-invariance and bi-invariance are defined analogously.

There is a one-to-one correspondence between norms and left-invariant metrics (and analogously between seminorms and left-invariant pseudometrics). Indeed, given a left-invariant metric $d$, the formula $\lambda_d(x):=d(x,1_G)$ gives a norm on $G$; and conversely, given a norm $\lambda$ on $G$, the formula $d_\lambda(x,y):=\lambda(x^{-1}\cdot y)$ gives a left-invariant metric. 

Moreover, if the metric $d$ was bi-invariant, the the formula above gives a conjugacy-invariant norm. Conversely, if the norm $\lambda$ was conjugacy-invariant, then the formula above gives a bi-invariant metric.

It turns out it is more convenient for us to work with norms rather than metrics, so we will do so in the sequel.\\

It follows that (semi)norms on groups define a topology there. However, the topology on a group $G$ determined by some (semi)norm $\lambda$ on $G$ does not in general make it a topological group; i.e. the group operations are not automatically continuous. The following is a necessary and sufficient condition on a (semi)norm to make the group operations continuous. We leave the verification to the reader.
\begin{fact}
Let $\lambda$ be a (semi)norm on a group $G$. Then $G$ with the inherited topology is a topological group if and only if for every $x\in G$ and every $\epsilon>0$ there exists $\delta>0$ such that $\forall y\in G (\lambda(y)<\delta\Rightarrow \lambda(x^{-1}\cdot y\cdot x)<\varepsilon)$; in other words, the function $y\to \lambda(x^{-1}\cdot y\cdot x)$ is continuous at $1_G$.
\end{fact}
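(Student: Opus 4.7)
The plan is to work with the left-invariant pseudometric $d_\lambda(x,y) = \lambda(x^{-1}y)$ and verify the two implications directly. Throughout, I will use that left translations $L_g\colon x\mapsto gx$ are isometries of $(G,d_\lambda)$, so they are automatically continuous; continuity of multiplication and of inversion at a general point will therefore reduce, via left-translation, to continuity at the identity, which is where $\lambda$ lives.

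For the \emph{only if} direction, assume $G$ is a topological group under the topology induced by $\lambda$. Then for any fixed $x\in G$ the conjugation map $c_x\colon y\mapsto x^{-1}yx$ is continuous as a composition of left/right translations, and $c_x(1_G)=1_G$. Continuity of $c_x$ at $1_G$, translated into the language of $\lambda$, is exactly the stated condition: for every $\varepsilon>0$ there is $\delta>0$ such that $\lambda(y)<\delta$ implies $\lambda(x^{-1}yx)<\varepsilon$.

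For the \emph{if} direction, I will check continuity of multiplication and inversion separately. For inversion at $g\in G$, suppose $x_n\to g$, i.e.\ $\lambda(g^{-1}x_n)\to 0$. Writing $y_n:=g^{-1}x_n$, a short computation gives $x_n g^{-1}=gy_ng^{-1}$, hence $d_\lambda(x_n^{-1},g^{-1})=\lambda(x_ng^{-1})=\lambda(gy_ng^{-1})$. Applying the hypothesis with $x:=g^{-1}$ shows $\lambda(gy_ng^{-1})\to 0$, giving continuity of inversion. For multiplication at $(g,h)$, suppose $x_n\to g$ and $y_n\to h$, and set $a_n:=x_n^{-1}g$, $b_n:=y_n^{-1}h$, so $\lambda(a_n),\lambda(b_n)\to 0$. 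A direct rewriting yields
\[
 d_\lambda(x_ny_n,gh)=\lambda(y_n^{-1}x_n^{-1}gh)=\lambda\bigl(b_n h^{-1}a_n h\bigr)\le \lambda(b_n)+\lambda(h^{-1}a_n h),
\]
by the triangle inequality and the symmetry $\lambda(u)=\lambda(u^{-1})$ if needed. The first summand tends to $0$ by assumption; for the second, apply the hypothesis with $x:=h$ to the sequence $a_n\to 1_G$ to conclude $\lambda(h^{-1}a_nh)\to 0$. Hence $x_ny_n\to gh$.

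There is no real obstacle here; the statement is essentially the observation that in a group with a left-invariant (pseudo)metric the only missing ingredient beyond automatic left-continuity is the continuity of conjugation maps at the identity, so I expect the only care needed is in the bookkeeping of inverses when rewriting $d_\lambda(x_ny_n,gh)$ into a form where the hypothesis applies cleanly.
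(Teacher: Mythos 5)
Your proof is correct; the paper itself leaves this verification to the reader, and your argument is exactly the intended one: left translations are isometries of $d_\lambda(x,y)=\lambda(x^{-1}y)$, so everything reduces to continuity of conjugation at $1_G$, and the identities $d_\lambda(x_n^{-1},g^{-1})=\lambda(gy_ng^{-1})$ and $d_\lambda(x_ny_n,gh)=\lambda(b_nh^{-1}a_nh)\le\lambda(b_n)+\lambda(h^{-1}a_nh)$ are verified correctly. (The passage to sequences is harmless since the pseudometric topology is first countable, and the hypothesis quantifies over all $x\in G$, so applying it to $g^{-1}$ and to $h$ as you do is legitimate.)
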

We shall call such (semi)norms \emph{continuous (semi)norms}. Note that when a (semi)norm is conjugacy-invariant then it is continuous. We remark that in literature, a norm being continuous often means that it is continuous with respect to some given topology on the group. Here however, the only group topologies we consider are those given by some norms, resp. pseudonorms.\\

Recall that when $f:X\rightarrow Y$ is a function between metric spaces $X$ and $Y$ which is continuous at the point $x\in X$, then a \emph{modulus of continuity} of $f$ at $x$ is a function $\omega:\left[0,\infty\right)\rightarrow \left[0,\infty\right)$ continuous at $0$ and vanishing there which quantitatively measures this continuity (of $f$ at $x$). That is, we have $d_Y(f(x),f(y))\leq \omega(d_X(x,y))$. Clearly, a modulus of continuity for a given function at a given point is not unique, however one can always take the `minimal one' by defining $\omega(r)=\sup\{d_Y(f(x),f(y)):y\in X, d_X(x,y)\leq r\}$. We shall use this notion in the context of normed groups.

\begin{defin}
Let $G$ be a group equipped with a continuous (semi)norm $\lambda$. We say that the functions $(\Gamma_x^G)_{x\in G}$, where $\Gamma_x^G: \left[0,\infty\right)\rightarrow \left[0,\infty\right)$ for every $x\in G$, are \emph{moduli of continuity}, or \emph{MOC}, for $G$ if for every $x\in G$:

\begin{itemize}
\item $\Gamma_x^G(r)\to_{r\to 0} 0$ and $\Gamma_x^G(0)=0$;
\item $\Gamma_x^G(r)\geq r$ for every $r\geq 0$;
\item for every $g\in G$ we have $\lambda(x^{-1}\cdot g\cdot x)\leq \Gamma_x^G(\lambda(g))$;
\item $\Gamma_x^G=\Gamma_{x^{-1}}^G$.
\end{itemize}

When considering a single element $x\in G$, we say that $\Gamma_x^G\in (\Gamma_x^G)_{x\in G}$ is a modulus of continuity (or MOC) for $x$ in $G$.
\end{defin}

We note that in \cite{DiGa}, in this context of groups with norms (resp. left-invariant metrics), these moduli are called scales.\\

For a given group $G$ with a continuous (semi)norm $\lambda$, moduli of continuity are not determined uniquely. However, it is again possible to consider the minimal moduli: for $x\in G$ and $r\in\left[0,\infty\right)$ set $$\Gamma_x^G=\max\{r,\sup\{\lambda(x^\varepsilon\cdot g\cdot x^{-\varepsilon}):g\in G,\lambda(g)\leq r,\varepsilon\in\{1,-1\}\}.$$ Note that such a MOC satisfies additionally
\begin{itemize}
\item $\Gamma_x^G(r)\leq 2\lambda(x)+r$.
\end{itemize}
Although we shall not always work with the minimal moduli, unless stated otherwise, $\Gamma_x^G$ will denote the minimal MOC for $x\in G$ in $G$.
\begin{example}
Let $(G,\lambda)$ be a normed group. Then $\lambda$ is conjugacy-invariant if and only if the minimal moduli $(\Gamma_x^G)_{x\in G}$ are constant functions, i.e. $\Gamma_x^G(r)=r$ for every $x\in G$ and $r\in\left[0,\infty\right)$.
\end{example}

The reason to work with MOC, even though they are not unique, is to control the `uniformity' of embeddings between normed groups. Suppose that $(G_1,\lambda_1)\subseteq (G_2,\lambda_2)\subseteq \ldots$ is an increasing sequence of groups with continuous norms. Then the direct union $(G,\lambda)$, where $G=\bigcup_n G_n$ and $\lambda=\bigcup_n \lambda_n$, is not in general a group with a continuous norm, i.e. the continuity of $\lambda=\lim_n \lambda_n$ may be lost in the limit. The reason for that is that when $\Gamma_x^i$ is a modulus of continuity for some $x\in G_i$ in $G_i$, it may no longer be a modulus of continuity for $x\in G_i\subseteq G_{i+1}$ in $G_{i+1}$. Later on, we will work with embeddings between normed groups that preserve some moduli of continuity in order to guarantee that norms on certain limit groups are still continuous.\\

We shall conclude this section with several other facts concerning normed groups.

First we want to recall the following geometric notion that will be useful later.
\begin{defin}
Let $(G,\lambda)$ be a normed topological group. We say that $\lambda$ is \emph{proper} if for every $r>0$ the set $\{g\in G:\lambda(g)\leq r\}$ is compact. In other words, $G$ with the induced metric is a proper metric space.

In case $(G,\lambda)$ is countable discrete, it means that for every $r>0$ the set $\{g\in G:\lambda(g)\leq r\}$ is finite.
\end{defin}

Second, we mention that if we have a group with a continuous seminorm we can always quotient to get a genuine norm on the quotient group.
\begin{fact}\label{makequotient}
Let $G$ be a group with a continuous seminorm $\lambda$. Then the set $N=\{g\in G:\lambda(g)=0\}$ is a closed normal subgroup, and $\lambda$ is constant on any left coset of $N$, thus it determines a continuous norm on $G/N$.
\end{fact}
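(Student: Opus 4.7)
The plan is to verify each assertion in sequence: $N$ is a subgroup, it is normal, it is closed, $\lambda$ descends to a function on cosets, and the resulting function is a continuous norm.

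First I would check that $N$ is a subgroup using just the seminorm axioms: $1_G\in N$ because $\lambda(1_G)=0$ by the right-to-left implication; if $g\in N$ then $\lambda(g^{-1})=\lambda(g)=0$; and if $g,h\in N$ then $\lambda(gh)\leq\lambda(g)+\lambda(h)=0$. Normality is where I would use continuity of $\lambda$: for any $g\in G$ and any $\varepsilon>0$, the fact stated above yields $\delta>0$ with $\lambda(y)<\delta\Rightarrow\lambda(g^{-1}yg)<\varepsilon$; applying this to $n\in N$ (where $\lambda(n)=0<\delta$) gives $\lambda(g^{-1}ng)<\varepsilon$ for every $\varepsilon$, hence $g^{-1}ng\in N$. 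For closedness, I would work with the induced left-invariant pseudometric $d(x,y)=\lambda(x^{-1}y)$ (symmetric because $\lambda(z^{-1})=\lambda(z)$): if $g\notin N$ then $\lambda(g)>0$, and for any $h$ with $d(g,h)<\lambda(g)$ the triangle inequality gives $\lambda(g)\leq\lambda(h)+\lambda(h^{-1}g)=\lambda(h)+d(g,h)$, so $\lambda(h)>0$, i.e.\ the open ball of radius $\lambda(g)$ around $g$ misses $N$.

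Next, to see that $\lambda$ is constant on left cosets, fix $g\in G$ and $n\in N$; then $\lambda(gn)\leq\lambda(g)+\lambda(n)=\lambda(g)$ and symmetrically $\lambda(g)=\lambda(gn\cdot n^{-1})\leq\lambda(gn)+\lambda(n^{-1})=\lambda(gn)$, so $\lambda(gn)=\lambda(g)$. This lets me define $\bar\lambda:G/N\to\Rea_0^+$ by $\bar\lambda(gN):=\lambda(g)$ unambiguously.

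Finally I would verify the norm axioms and continuity of $\bar\lambda$. Symmetry and subadditivity descend directly: $\bar\lambda((gN)^{-1})=\lambda(g^{-1})=\lambda(g)=\bar\lambda(gN)$ and $\bar\lambda(gN\cdot hN)=\lambda(gh)\leq\lambda(g)+\lambda(h)$. Faithfulness $\bar\lambda(gN)=0\Leftrightarrow gN=N$ is immediate from the definition of $N$. For continuity at the identity after conjugation by $gN$, I would simply transport the $\delta$ witnessing continuity of $\lambda$ with respect to $g$: if $\bar\lambda(hN)<\delta$ then for the representative $h$ one has $\lambda(h)<\delta$, hence $\lambda(g^{-1}hg)<\varepsilon$, which equals $\bar\lambda((gN)^{-1}(hN)(gN))$. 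There is no single hard step here; the only subtlety is noticing that the continuity hypothesis on $\lambda$ is essential precisely for the normality of $N$, and that without it $N$ would still be a subgroup but might not be normal.
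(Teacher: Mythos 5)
Your proof is correct and follows essentially the same route as the paper's: normality from continuity of $\lambda$ at $1_G$ under conjugation, closedness from the induced pseudometric, and constancy on cosets via the same two-sided triangle-inequality computation. You simply spell out the details (subgroup axioms, well-definedness, and continuity of the quotient norm) that the paper leaves to the reader.
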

\begin{proof}
$N$ is by the definition of the topology on $G$ closed. Since for any $g,h\in G$ we have $\lambda(g)=\lambda(g^{-1})$, $\lambda(g\cdot h)\leq \lambda(g)+\lambda(h)$ and $\lambda$ is continuous, it immediately follows that $N$ is a normal subgroup. Take any $x\in G$ and $g\in N$. We show that $\lambda(x)=\lambda(x\cdot g)$. We have $\lambda(x\cdot g)\leq \lambda(x)+\lambda(g)=\lambda(x)=\lambda(x\cdot g\cdot g^{-1})\leq \lambda(x\cdot g)+\lambda(g^{-1})=\lambda(x\cdot g)$.
\end{proof}

Finally, in order to persuade the reader that there are indeed a plethora of groups with continuous (semi)norms, let us mention the classical result of Birkhoff and Kakutani. It says that a group $G$ with topology $\tau$ is a first-countable topological group if and only if there exists a continuous seminorm on $G$ which induces the topology $\tau$ of $G$. Moreover, $G$ is Hausdorff if and only if the seminorm is a norm.
\subsection{Completeness in normed groups}\label{completeness_sect}
Now for a moment, we switch to continuous left-invariant (pseudo)metrics rather than (semi)norms, where by a \emph{continuous} left-invariant (pseudo)metric we mean a left-invariant (pseudo)metric whose associated (semi)norm is continuous. So assume we are given a group $G$ with a continuous left-invariant (pseudo)metric $d$. It is well known that a metric completion of $G$ with respect to $d$ need not to be a group, however it is always a semigroup. Indeed, it is an exercise to check that the multiplication operation extends to the metric completion; in other words, whenever $(x_n)_n$ and $(y_n)_n$ are Cauchy sequences in $G$, then $(x_n\cdot y_n)_n$ is a Cauchy sequence as well. On the other hand, the inverse operation might not extend to the completion since the sequence $(x_n)_n$ being Cauchy does not guarantee that the sequence of inverses $(x^{-1}_n)_n$ is also Cauchy. Consider for example $S_\infty$, the infinite permutation group of $\Nat$, with a left-invariant metric $d$ defined as $d(x,y)=\max\{1/n:x(n)\neq y(n)\}$. Completion of $S_\infty$ with respect to this metric is the semigroup of all injective mappings from $\Nat$ into $\Nat$.\\

However, there is another way how to canonically complete a group with a left-invariant metric. 
\begin{fact}\label{obs2}
Let $G$ and $d$ be as before. Consider the metric $D(x,y):=d(x,y)+d(x^{-1},y^{-1})$ and the completion of $G$ with respect to $D$. Then the group operations and the original metric $d$ extend to this completion. 
\end{fact}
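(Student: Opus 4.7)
Let $\lambda$ denote the norm associated with $d$, so that $d(x,y)=\lambda(x^{-1}y)$. Two observations reduce the problem to extending multiplication. First, the identity
\[
D(x^{-1},y^{-1})=d(x^{-1},y^{-1})+d(x,y)=D(x,y)
\]
shows that inversion is a bijective $D$-isometry on $G$, hence extends uniquely to a $D$-isometry of the completion $\overline{G}^{D}$. Second, since $d\leq D$, the function $d$ is $1$-Lipschitz with respect to $D$ and therefore extends by uniform continuity to a pseudometric on $\overline{G}^{D}$; left-invariance of this extension will pass to the limit once multiplication is available.

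The substantive step is extending multiplication: I show that whenever $(x_n)_n$ and $(y_n)_n$ are $D$-Cauchy in $G$, the sequence $(x_ny_n)_n$ is $D$-Cauchy and its limit depends only on the $D$-equivalence classes of the inputs. By left-invariance,
\[
d(x_ny_n,x_my_m)=\lambda(y_n^{-1}x_n^{-1}x_my_m)\leq\lambda\bigl(y_n^{-1}(x_n^{-1}x_m)y_n\bigr)+d(y_n,y_m).
\]
To control the first summand I freeze a reference index $N_0$ and decompose $y_n=y_{N_0}\cdot(y_{N_0}^{-1}y_n)$, which yields
\[
\lambda\bigl(y_n^{-1}(x_n^{-1}x_m)y_n\bigr)\leq 2\,d(y_{N_0},y_n)+\Gamma_{y_{N_0}}^{G}\bigl(d(x_n,x_m)\bigr).
\]
Given $\varepsilon>0$, I pick $N_0$ so that $d(y_{N_0},y_n)<\varepsilon$ for all $n\geq N_0$, then $\delta>0$ with $\Gamma_{y_{N_0}}^{G}(\delta)<\varepsilon$, and finally $N\geq N_0$ with $d(x_n,x_m)<\delta$ and $d(y_n,y_m)<\varepsilon$ for $n,m\geq N$; summing gives a bound below $4\varepsilon$. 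The corresponding estimate for $d((x_ny_n)^{-1},(x_my_m)^{-1})$ is obtained by applying the same argument to the $D$-Cauchy sequences $(y_n^{-1})_n$ and $(x_n^{-1})_n$, so $(x_ny_n)_n$ is $D$-Cauchy. Well-definedness on equivalence classes (if $D(x_n,x_n'),D(y_n,y_n')\to 0$ then $D(x_ny_n,x_n'y_n')\to 0$) follows from the same computation, and the group axioms then pass from $G$ to $\overline{G}^{D}$ by density and continuity.

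The only real obstacle is the conjugation estimate above. Moduli of continuity $\Gamma_y^{G}$ are pointwise in the base point $y$ and not uniform, so one cannot hope to bound $\lambda(y_n^{-1}uy_n)$ by a single modulus applied to $\lambda(u)$ as $y_n$ varies along a Cauchy sequence. Freezing the base at a fixed $y_{N_0}$ and absorbing the small drift $y_{N_0}^{-1}y_n$ through the triangle inequality converts the pointwise MOC at one element into a bound valid for all sufficiently late $n$; the use of the two-sided metric $D$ rather than $d$ alone is in turn essential for handling the inverse factor $y_n^{-1}x_n^{-1}$, since a $D$-Cauchy sequence is exactly one whose inverses are also Cauchy.
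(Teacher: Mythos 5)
The paper states this Fact without proof (the preceding paragraph calls even the semigroup version of the claim ``an exercise''), so there is no argument of the author's to compare against; your proof is correct and supplies exactly the intended details. The crux---freezing the conjugating element at a reference index $y_{N_0}$ and absorbing the drift $y_{N_0}^{-1}y_n$ via the triangle inequality so that the single modulus $\Gamma_{y_{N_0}}^{G}$ controls $\lambda(y_n^{-1}(x_n^{-1}x_m)y_n)$ for all late $n$---is precisely where the continuity hypothesis on the norm enters, and your observation that $D$-Cauchyness of $(x_n^{-1})_n$ and $(y_n^{-1})_n$ is what makes $((x_ny_n)^{-1})_n$ Cauchy correctly identifies why the Ra\v ikov completion yields a group where the plain $d$-completion yields only a semigroup.
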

We shall call it a \emph{Ra\v ikov metric completion} of $G$, since it precisely corresponds to the Ra\v ikov completion of a topological group. A normed/metric group $G$ whose Ra\v ikov metric completion coincides with $G$ is called \emph{Ra\v ikov metrically complete}. Note that the Ra\v ikov metric completion is nothing but adding limits for all Cauchy sequences $(x_n)_n\subseteq G$ such that the sequence of inverses $(x^{-1}_n)_n$ is also Cauchy.
\subsection{Free groups}
Finally, since we shall work with free groups often we recall some basic facts and fix some notation related to them here. Let $A$ be a non-empty set. Recall that the free group $F_A$ generated by $A$ is the free group having elements of $A$ as free generators. Consider the disjoint union $\{1\}\coprod A\coprod A^{-1}$ denoted by $\bar{A}$, where $A^{-1}$ is the set of formal inverses of $A$, i.e. $A^{-1}=\{a^{-1}:a\in A\}$. One can view the free group $F_A$ as the set of all reduced words over the alphabet $\bar{A}$. A word $w=w_1\ldots w_n$, where $w_1,\ldots,w_n\in\bar{A}$ is reduced if either $n=1$ and $w_1=1$, or there is no $i\leq n$ such that $w_i=1$ and $w_i=w_{i+1}^{-1}$. For any word (not necessarily reduced) $w$ over the alphabet $\bar{A}$, by $w'$ we denote the reduction of $w$, i.e. the unique reduced word obtained from $w$ by successively removing the pairs $w_i, w_{i+1}$, where $w_i=w_{i+1}^{-1}$, and letters $1$ from $w$ till it is reduced. In case this procedure leads to an empty word, we 
set $w'$ to be $1$. For any word $w$, by $|w|$ we denote the length of the word, i.e. the number of letters from alphabet used to make $w$.

Then the group multiplication of two reduced words $w_1$ and $w_2$ is defined to be $(w_1w_2)'$, i.e. concatenation of two words followed by reduction. The inverse of a reduced word $w_1\ldots w_n$ is the reduced word $w_n^{-1}\ldots w_1^{-1}$. The unit is the reduced word $1$.\\

We shall also use the following basic observation.
\begin{observation}\label{obs1}
Let $H$ be an at most countable group equipped with a (continuous) norm $\lambda$. Then there exists a (continuous) seminorm $\lambda'$ on $F_\infty$, the free group of countably many free generators, such that the quotient $F_\infty/N$, where $N=\{h\in F_\infty:\lambda'(h)=0\}$, is isometrically isomorphic to $(H,\lambda)$.
\end{observation}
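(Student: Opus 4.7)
The proof is essentially a pullback construction, with the only substantive thing to check being that continuity transfers well.

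The plan is to start by fixing an enumeration $(h_n)_{n\in\Nat}$ of $H$ (possible since $H$ is at most countable), and define a surjective homomorphism $\pi : F_\infty \to H$ by sending the $n$-th free generator $a_n$ of $F_\infty$ to $h_n$, and extending multiplicatively; surjectivity is immediate because every element of $H$ is in the range of $\pi$. Then I would define $\lambda' : F_\infty \to \Rea_0^+$ by the pullback formula
\[
\lambda'(w) := \lambda(\pi(w)), \qquad w \in F_\infty.
\]

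Next I would verify that $\lambda'$ is a seminorm. The three seminorm axioms follow at once from the corresponding properties of $\lambda$ together with the fact that $\pi$ is a homomorphism: $\lambda'(w^{-1}) = \lambda(\pi(w)^{-1}) = \lambda(\pi(w)) = \lambda'(w)$, the triangle inequality transfers via $\pi(w_1 w_2) = \pi(w_1)\pi(w_2)$, and $\lambda'(1) = \lambda(1_H) = 0$. In general $\lambda'$ will not be a norm, since any nontrivial element of $\ker\pi$ has $\lambda'$-value zero; that is precisely what we want for the quotient step. For the continuity claim, given $x \in F_\infty$ and $\varepsilon > 0$, use that $\lambda$ is continuous at $1_H$ with respect to conjugation by $\pi(x)$ to get some $\delta > 0$ such that $\lambda(g) < \delta \Rightarrow \lambda(\pi(x)^{-1} g \pi(x)) < \varepsilon$ for $g \in H$. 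Then whenever $\lambda'(y) < \delta$, i.e.\ $\lambda(\pi(y)) < \delta$, we get $\lambda'(x^{-1} y x) = \lambda(\pi(x)^{-1}\pi(y)\pi(x)) < \varepsilon$, so $\lambda'$ is continuous.

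Finally I would identify the zero set and take the quotient. By construction $N = \{w \in F_\infty : \lambda'(w) = 0\} = \{w \in F_\infty : \pi(w) = 1_H\} = \ker \pi$, where the middle equality uses that $\lambda$ is a genuine norm on $H$. By Fact \ref{makequotient}, $\lambda'$ descends to a continuous norm on $F_\infty/N$, and by the first isomorphism theorem the map $\pi$ factors through an algebraic isomorphism $\bar\pi : F_\infty/N \to H$; the identity $\lambda'([w]) = \lambda(\pi(w)) = \lambda(\bar\pi([w]))$ shows $\bar\pi$ is an isometry of the associated left-invariant metrics, completing the proof.

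There is no real obstacle here; the only point to be mildly careful about is that the continuity of $\lambda'$ must be checked from scratch rather than inherited by being a homomorphic image, which is why the conjugation criterion (the Fact just before the definition of MOC) is the right tool to invoke.
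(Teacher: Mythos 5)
Your proposal is correct and follows essentially the same route as the paper: the paper's own (very terse) argument is exactly this pullback construction, defining $\lambda'(w)=\lambda(w_H)$ where $w_H$ is the evaluation of $w$ in $H$ under a surjection sending free generators to a countable generating set of $H$. You simply spell out the details (seminorm axioms, continuity via the conjugation criterion, identification of $N$ with $\ker\pi$, and the isometric isomorphism from the first isomorphism theorem) that the paper leaves to the reader.
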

Indeed, just pick some countable set of generators (with possible repetition) $(h_n)_n$. For each reduced word $w$ over the alphabet $\{1,h_n,h_n^{-1}:n\in\Nat\}$ denote by $w_H$ its evaluation in $H$, i.e. the group element of $H$ that corresponds to the natural evaluating of $w$ in $H$. Then we consider the free group freely generated by $(h_n)_n$ and define the seminorm $\lambda'$ by the formula $\lambda'(w)=\lambda(w_H)$ for any word $w$ over the alphabet $\{1,h_n,h_n^{-1}:n\in\Nat\}$.\\

\section{Normed ultraproducts of normed groups and group embeddings into them}
Metric ultraproducts of groups with bi-invariant metric, resp. conjugacy-invariant norms are well-known from the literature. We refer to the appendix in \cite{CaLu} for information about them. Let $\mathcal{M}$ be some class of groups equipped with bi-invariant metric/conjugacy-invariant norms. It is of great current interest which (discrete) groups can be embedded into a metric ultraproduct of groups from $\mathcal{M}$.

As already mentioned in the introduction, the most interesting cases are when $\mathcal{M}$ is the set of unitary groups of finite rank equipped with the Hilbert-Schmidt distance and when $\mathcal{M}$ is the set of finite permutation groups equipped with the normalized Hamming distance. The former are the hyperlinear groups and the latter are the sofic groups. We recall they were introduced by Gromov (\cite{Gro}). They are related to the Gottschalk's surjunctivity conjecture. The major open problem is whether every group is hyperlinear and sofic (we note that every sofic group is hyperlinear \cite{ElSz}). We refer the reader to the survey \cite{Pe-sh} and to the monograph \cite{CaLu} where these classes of groups are defined and metric ultraproducts of groups with bi-invariant metrics are treated.

Weakly sofic groups are $\mathcal{M}$-approximable groups, where $\mathcal{M}$ is the class of all finite groups with arbitrary bi-invariant metric. Weakly sofic groups as a generalization of sofic groups were introduced by Glebsky and Rivera in \cite{GlRi} (see also \cite{Gl}) as the existence of a non-weakly sofic group is equivalent to a certain conjecture about pro-finite topology on finitely generated free groups.

We also recall from the introduction the linear sofic groups introduced by Arzhantseva and Paunescu in \cite{ArPa}, which are groups approximable by general linear groups with the normalized rank distance. When $\mathcal{M}$ is the set of finite groups with a commutator-contractive bi-invariant metric, then such $\mathcal{M}$-approximable groups were called as $F_c$-approximable groups in \cite{Thom}. Finally, let us mention that when $\mathcal{M}$ consists of all finite groups with the trivial metric (i.e. taking only $\{0,1\}$ as values), then such groups were called LEF (locally embeddable into finite) by Gordon and Vershik (\cite{GoVe}) (similarly, $\mathcal{M}$ is LEA if it consists of finitely generated amenable groups with trivial metric).

So far, it has been widely open whether there are groups which are \emph{not} approximable by any such classes $\mathcal{M}$ mentioned. The only exceptions besides the rather simple case of LEF groups (or analogously LEA groups) is when $\mathcal{M}$ is $F_c$, as it was proved by Thom in \cite{Thom} that the Higman's group is not $F_c$-approximable.

\subsection{Definition of the metric ultraproduct}
Let now $(G_n,\lambda_n)_{n\in\Nat}$ be a sequence of general normed groups and fix some non-principal ultrafilter $\mathcal{U}$ on $\Nat$. We would like to define a metric/normed ultraproduct of them.  Before we proceed any further let us remark here that in this paper we consider only ultraproducts of countable sequences of groups, thus all ultrafilters are over $\Nat$. Also, whenever we say \emph{ultraproduct} we automatically mean an ultraproduct determined by a non-principal ultrafilter.

We begin with recalling some standard constructions of metric ultraproducts. At first, one takes the direct product $\prod_n G_n$. In order to define an ultraproduct norm there one has to restrict to a subgroup of the product of those elements whose coordinates have norm bounded by one common constant. That is, using a Banach space theory notation, let $(G_n)_{\ell_\infty}=\{(g_n)_n:\sup_n \lambda_n(g_n)<\infty)\}$. Let $\lambda_\infty$ be the supremum norm on $(G_n)_{\ell_\infty}$. Consider then the subgroup $N=\{(g_n)_n\in (G_n)_{\ell_\infty}:\lim_\mathcal{U} \lambda_n(g_n)=0\}$. If all the $\lambda_n$'s were conjugacy-invariant, then $N$ is a normal subgroup and the quotient $(G_n)_{\ell_\infty}/N=(G_n)_\mathcal{U}$ with the quotient norm is the metric ultraproduct of the sequence $(G_n,\lambda_n)_n$. 

Alternatively, one might equip $(G_n)_{\ell_\infty}$ with the ultraproduct seminorm $\lambda_\mathcal{U}$, where $\lambda_\mathcal{U}((g_n))=\lim_\mathcal{U} \lambda_n(g_n)$ and again consider the kernel $N=\{(g_n)_n\in (G_n)_{\ell_\infty}:\lambda_\mathcal{U} ((g_n))=0\}$. If the norm $\lambda_\mathcal{U}$ is continuous, $N$ will be a normal subgroup and we can take the quotient. Again, if all $\lambda_n$'s are conjugacy invariant then $\lambda_\mathcal{U}$ will be conjugacy-invariant as well, and thus continuous. So $N$ is a normal subgroup.\\

If not all $\lambda_n$'s are conjugacy-invariant then $\lambda_\mathcal{U}$ is an ultraproduct seminorm which however does not have to be continuous, thus $((G_n)_{\ell_\infty},\lambda_\mathcal{U})$ is not a topological group and the kernel subgroup does not have to be normal. In such a case, one has to restrict the subgroup $(G_n)_{\ell_\infty}\leq \prod_n G_n$ more. More precisely, we shall restrict to a subset of $(G_n)_{\ell_\infty}\leq \prod_n G_n$ (which will turn out to be a subgroup) of elements that obey some modulus of continuity. That is the content of the following definition.
\begin{defin}
Call an element $(g_n)_n\in (G_n)_{\ell_\infty}$ \emph{continuous in the ultraproduct} if 
\begin{multline}\label{contultprod}
\forall \varepsilon>0\; \exists\delta>0\; \exists A\in\mathcal{U}\text{ such that }\forall n\in A\; \forall h_n\in G_n\\ \text{ if }\lambda_n(h_n)\leq \delta \text{ then } \lambda_n(g_n^{-1}\cdot h_n\cdot g_n)<\varepsilon \text{ and }\lambda_N(g_n\cdot h_n\cdot g_n^{-1})<\varepsilon.
\end{multline}
\end{defin}
 
Equivalently, one can view elements that are continuous in the ultraproduct as follows. For each $(g_n)_n\in (G_n)_{\ell_\infty}$ take some corresponding sequence $(\Gamma_n)_n$ of moduli of continuity (provided they do exist), i.e $\Gamma_n$ is an MOC for $g_n$ in $G_n$. We take the ultralimit of this sequence of moduli, i.e. we define $\Gamma_\mathcal{U}(r)=\lim_\mathcal{U} \Gamma_n(r)$. If this ultralimit $\Gamma_\mathcal{U}$ is again a MOC (for $(g_n)_n$ in $(G_n)_{\ell_\infty}$), then $(g_n)_n$ is continuous in the ultraproduct. Conversely, if $(g_n)_n$ is continuous in the ultraproduct then there exists a sequence $(\Gamma_n)_n$ of moduli of continuity such that $\Gamma_n$ is an MOC for $g_n$ in $G_n$ and the ultralimit $\Gamma_\mathcal{U}$ is an MOC for $(g_n)_n$ in $(G_n)_{\ell_\infty}$.\\

Denote by $(G_n)_\mathcal{C}\leq (G_n)_{\ell_\infty}$ the subset of elements continuous in the ultraproduct. 
\begin{lem}
$(G_n)_\mathcal{C}$ is a subgroup of $(G_n)_{\ell_\infty}$. Moreover, if $\lambda_n$'s were conjugacy-invariant, then $(G_n)_\mathcal{C}=(G_n)_{\ell_\infty}$.
\end{lem}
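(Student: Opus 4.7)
The plan is to verify the three subgroup axioms (identity, closure under inverses, closure under products) directly from the definition of continuity in the ultraproduct, and then observe that in the conjugacy-invariant case the condition becomes vacuous.

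For identity and inverses the argument is essentially trivial. The constant sequence $(1_{G_n})_n$ satisfies \eqref{contultprod} with $\delta=\varepsilon$ since $1_{G_n}^{-1}\cdot h_n\cdot 1_{G_n}=h_n$. Closure under inverses is built into the statement: the condition \eqref{contultprod} is symmetric in $g_n$ and $g_n^{-1}$, requiring control of both $\lambda_n(g_n^{-1}\cdot h_n\cdot g_n)$ and $\lambda_n(g_n\cdot h_n\cdot g_n^{-1})$, so $(g_n)_n\in (G_n)_\mathcal{C}$ if and only if $(g_n^{-1})_n\in (G_n)_\mathcal{C}$.

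The substantive step is closure under products. Given $(g_n)_n,(h_n)_n\in (G_n)_\mathcal{C}$ and $\varepsilon>0$, I would produce $\delta>0$ and an ultrafilter set witnessing \eqref{contultprod} for $(g_n\cdot h_n)_n$ by nesting the moduli. Apply continuity of $(h_n)_n$ at $\varepsilon$ to obtain $\delta_1>0$ and $A_1\in\mathcal{U}$ with $\lambda_n(k)\leq\delta_1\Rightarrow \lambda_n(h_n^{-1}\cdot k\cdot h_n)<\varepsilon$ for $n\in A_1$; then apply continuity of $(g_n)_n$ at $\delta_1$ to obtain $\delta_2>0$ and $A_2\in\mathcal{U}$ with $\lambda_n(k)\leq\delta_2\Rightarrow \lambda_n(g_n^{-1}\cdot k\cdot g_n)<\delta_1$ for $n\in A_2$. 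Then, for $n\in A_1\cap A_2\in\mathcal{U}$ and $\lambda_n(k_n)\leq\delta_2$, the identity $(g_n h_n)^{-1}\cdot k_n\cdot (g_n h_n)=h_n^{-1}\cdot(g_n^{-1}\cdot k_n\cdot g_n)\cdot h_n$ together with the inequality chain gives $\lambda_n((g_nh_n)^{-1}\cdot k_n\cdot (g_nh_n))<\varepsilon$. The symmetric estimate for $\lambda_n((g_nh_n)\cdot k_n\cdot (g_nh_n)^{-1})$ runs identically, using continuity of $(g_n)_n$ first at $\varepsilon$ and then of $(h_n)_n$ at the resulting $\delta_1$. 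Taking $\delta$ to be the minimum of the two $\delta_2$'s completes the product case.

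For the \emph{moreover} part, assume each $\lambda_n$ is conjugacy-invariant. Then for an arbitrary $(g_n)_n\in (G_n)_{\ell_\infty}$ and $\varepsilon>0$, the choice $\delta=\varepsilon/2$ works with $A=\Nat$: if $\lambda_n(h_n)\leq\delta$, then $\lambda_n(g_n^{-1}\cdot h_n\cdot g_n)=\lambda_n(h_n)\leq\varepsilon/2<\varepsilon$, and the same holds on the other side. Hence $(G_n)_\mathcal{C}=(G_n)_{\ell_\infty}$.

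The only mild obstacle is the bookkeeping of the two $\varepsilon$--$\delta$ pairs in the product step; everything else is formal manipulation of the definition.
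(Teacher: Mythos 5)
Your proof is correct and follows essentially the same route as the paper: closure under inverses is immediate from the symmetry of condition \eqref{contultprod}, closure under products comes from nesting the two $\varepsilon$--$\delta$ conditions, and the conjugacy-invariant case is vacuous. If anything you are slightly more careful than the paper, which sets up only one of the two nestings (conjugation by $h_n$ at level $\delta\to\delta'$ inside conjugation by $g_n$ at level $\delta'\to\varepsilon$) and declares the rest clear, whereas controlling both $(g_nh_n)^{-1}\cdot k\cdot(g_nh_n)=h_n^{-1}(g_n^{-1}kg_n)h_n$ and $(g_nh_n)\cdot k\cdot(g_nh_n)^{-1}=g_n(h_nkh_n^{-1})g_n^{-1}$ genuinely requires the two symmetric nestings and the minimum of the resulting $\delta$'s, exactly as you do.
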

\begin{proof}
If $(g_n)_n\in (G_n)_\mathcal{C}$ then by definition also $(g^{-1}_n)_n\in (G_n)_\mathcal{C}$, thus $(G_n)_\mathcal{C}$ is closed under taking inverses. Now pick some $(g_n)_n,(h_n)_n\in (G_n)_\mathcal{C}$. We show that $(g_n\cdot h_n)_n\in (G_n)_\mathcal{C}$. Take some $\varepsilon>0$ and we must find corresponding $A_\varepsilon\in \mathcal{U}$ and $\delta>0$ from the definition. By assumption, there are some $\delta'>0$ and $A_g\in\mathcal{U}$ such that for all $n\in A_g$ and $f_n\in G_n$ such that $\lambda_n(f_n)\leq \delta'$ we have $\lambda_n(g^\iota_n\cdot f_n\cdot g^{-\iota}_n)<\varepsilon$, for $\iota\in \{1,-1\}$. Similarly, by assumption, there are some $\delta>0$ and $A_h\in\mathcal{U}$ such that for all $n\in A_h$ and $f_n\in G_n$ such that $\lambda_n(f_n)\leq \delta$ we have $\lambda_n(h^\iota_n\cdot f_n\cdot h^{-\iota}_n)<\delta'$, for $\iota\in \{1,-1\}$. Now it is clear $A_\varepsilon=A_g\cap A_h$ and $\delta>0$ are as desired.

The moreover statement from the lemma is easy and left to the reader.
\end{proof}
 We consider the ultraproduct seminorm $\lambda$ on $(G_n)_\mathcal{C}$.
\begin{lem}
The ultraproduct seminorm $\lambda$ on $(G_n)_\mathcal{C}$ is continuous, thus the kernel subgroup is normal and we can quotient. 
\end{lem}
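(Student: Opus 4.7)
The plan is to verify directly the $\varepsilon$-$\delta$ condition characterizing continuity of a seminorm on a group (as stated in the Fact immediately after Fact \ref{makequotient} in the text, i.e.\ the one before the definition of continuous (semi)norm), namely that for each fixed $(g_n)_n \in (G_n)_\mathcal{C}$ and each $\varepsilon > 0$ there is $\delta > 0$ such that $\lambda((h_n)_n) < \delta$ forces $\lambda((g_n)_n^{-1}(h_n)_n(g_n)_n) < \varepsilon$ for every $(h_n)_n \in (G_n)_\mathcal{C}$. Once continuity of $\lambda$ is established, the normality of the kernel $N = \{(g_n)_n \in (G_n)_\mathcal{C} : \lambda((g_n)_n) = 0\}$ and the fact that we may form the quotient group are immediate consequences of Fact \ref{makequotient}.

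The main step is essentially a translation of the pointwise condition \eqref{contultprod} into a statement about ultralimits. Fix $(g_n)_n \in (G_n)_\mathcal{C}$ and $\varepsilon > 0$. I would apply the definition of ``continuous in the ultraproduct'' to the value $\varepsilon/2$ to obtain a $\delta > 0$ and a set $A \in \mathcal{U}$ so that for every $n \in A$ and every $h_n \in G_n$ with $\lambda_n(h_n) \leq \delta$ we have $\lambda_n(g_n^{-1} h_n g_n) < \varepsilon/2$ (and symmetrically for the conjugation by $g_n$). This $\delta$ will be the $\delta$ we seek.

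To close the argument, suppose $(h_n)_n \in (G_n)_\mathcal{C}$ satisfies $\lambda((h_n)_n) = \lim_\mathcal{U} \lambda_n(h_n) < \delta$. Then the set $B = \{ n \in \Nat : \lambda_n(h_n) \leq \delta \}$ belongs to $\mathcal{U}$, hence so does $A \cap B$. For every $n \in A \cap B$ the inequality $\lambda_n(g_n^{-1} h_n g_n) < \varepsilon/2$ holds, and therefore
\[
\lambda\bigl((g_n)_n^{-1}(h_n)_n(g_n)_n\bigr) \;=\; \lim_{\mathcal{U}} \lambda_n(g_n^{-1} h_n g_n) \;\leq\; \varepsilon/2 \;<\; \varepsilon,
\]
which is the required continuity.

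I do not anticipate any substantive obstacle; the content of the lemma is precisely that the ``continuous in the ultraproduct'' condition was designed to make this $\varepsilon$-$\delta$ transfer go through. The one point that requires a little care is the transition from the pointwise bound $\lambda_n(h_n) \leq \delta$, which holds only on the ultrafilter set $B$, to the hypothesis $\lim_\mathcal{U} \lambda_n(h_n) < \delta$; this is handled by intersecting with $A$ and using that $\mathcal{U}$ is closed under finite intersections. The ``moreover'' conclusion of the lemma then follows verbatim from Fact \ref{makequotient}.
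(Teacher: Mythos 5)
Your proposal is correct and follows essentially the same route as the paper's proof: fix $(g_n)_n$ and $\varepsilon$, extract $\delta$ and an ultrafilter set from the definition of continuity in the ultraproduct, and intersect it with the ultrafilter set on which $\lambda_n(h_n)<\delta$. The only (harmless) difference is that you work with $\varepsilon/2$ to guarantee a strict inequality after passing to the ultralimit, a minor point the paper glosses over.
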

\begin{proof}
Indeed, take some $(g_n)_n\in (G_n)_\mathcal{C}$ and $\varepsilon>0$. By definition, there is some $B\in\mathcal{U}$ and $\delta>0$ such that for every $n\in B$ we have $\lambda_n(g_n^{-1}\cdot h\cdot g_n)<\varepsilon$ for every $h\in G_n$ such that $\lambda_n(h)<\delta$. Take now some $(h_n)_n\in (G_n)_\mathcal{C}$ such that $\lambda((h_n))<\delta$. We need to show that $\lambda((g_n)^{-1}\cdot (h_n)\cdot (g_n))<\varepsilon$. It suffices to find $A_\varepsilon\in\mathcal{U}$ such that for every $n\in A_\varepsilon$ we have $\lambda_n(g_n^{-1}\cdot h_n\cdot g_n)<\varepsilon$. Since $\lambda((h_n))<\delta$ there is some $C\in\mathcal{U}$ such that for every $n\in C$ we have $\lambda_n(h_n)<\delta$. Thus it suffices to take $A_\varepsilon=B\cap C$.
\end{proof}

We note that one typical element of $(g_n)_n\in (G_n)_\mathcal{C}$ is such that there is a single MOC $\Gamma$ such that $\Gamma$ is a MOC for $g_n$ in $(G_n,\lambda_n)$ for every $n$.
\subsection{Ra\v ikov metric completeness}
Finally, we make some observations regarding the Ra\v ikov metric completeness defined in the previous section. It is known that ultraproducts of normed vector spaces or groups with conjugacy-invariant norms are complete. A group with a norm cannot be always complete as noted in Subsection \ref{completeness_sect}. However, they may be Ra\v ikov metrically completed as mentioned in Fact \ref{obs2}.
\begin{lem}
A metric ultraproduct of normed groups $(G_n,\lambda_n)$ is Ra\v ikov metrically complete, regardless of whether $G_n$'s were Ra\v ikov metrically complete.
\end{lem}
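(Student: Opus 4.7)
The plan is to prove Raikov metric completeness by a diagonal construction across the ultrafilter. Suppose $(x^{(k)})_k \subseteq (G_n)_{\mathcal{C}}/N$ is Cauchy with respect to the metric $D(x,y) = \lambda(x^{-1}y) + \lambda(xy^{-1})$. After passing to a subsequence, I would assume
\[ \lambda\bigl((x^{(k)})^{-1} x^{(k+1)}\bigr) + \lambda\bigl(x^{(k)}(x^{(k+1)})^{-1}\bigr) < 2^{-k} \]
for every $k$, and fix lifts $(g_n^{(k)})_n \in (G_n)_{\mathcal{C}}$ of each $x^{(k)}$.

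Next, I would choose sets $A_k \in \mathcal{U}$ with $A_1 \supseteq A_2 \supseteq \cdots$ and $A_k \subseteq \{k, k+1, \ldots\}$, so that $\bigcap_k A_k = \emptyset$, simultaneously witnessing (i) the coordinatewise fast convergence $\lambda_n((g_n^{(k)})^{-1} g_n^{(k+1)}) + \lambda_n(g_n^{(k)}(g_n^{(k+1)})^{-1}) < 2^{-k}$ on $A_k$, and (ii) the continuity of $(g_n^{(k)})_n$ in the ultraproduct at scale $2^{-k}$: some $\delta_k > 0$ with $\lambda_n((g_n^{(k)})^{\pm 1} h (g_n^{(k)})^{\mp 1}) < 2^{-k}$ whenever $n \in A_k$ and $\lambda_n(h) < \delta_k$. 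Setting $k(n) := \max\{k : n \in A_k\}$ (with $k(n) = 0$ and $g_n := 1$ otherwise), I define the diagonal $g_n := g_n^{(k(n))}$.

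The main obstacle—absent from the classical bi-invariant case—is verifying that $(g_n)_n$ lies in $(G_n)_{\mathcal{C}}$, i.e., is continuous in the ultraproduct. The key decomposition is
\[ g_n^{-1} h g_n = \bigl(g_n^{-1} g_n^{(k)}\bigr) \cdot \bigl((g_n^{(k)})^{-1} h\, g_n^{(k)}\bigr) \cdot \bigl((g_n^{(k)})^{-1} g_n\bigr); \]
for $n \in A_k$, nesting of the $A_j$ provides the telescope $\lambda_n((g_n^{(k)})^{-1} g_n) \leq \sum_{j=k}^{k(n)-1} 2^{-j} < 2^{-k+1}$, and symmetrically $\lambda_n(g_n^{(k)} g_n^{-1}) < 2^{-k+1}$ using the inverse-Cauchy half of the fast-convergence bound. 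Together with the middle factor bounded by $2^{-k}$, this gives $\lambda_n(g_n^{-1} h g_n) < 5 \cdot 2^{-k}$ on $A_k$ whenever $\lambda_n(h) < \delta_k$, and the reverse conjugation is bounded symmetrically; choosing $k$ with $5 \cdot 2^{-k} < \varepsilon$ then witnesses continuity at scale $\varepsilon$. Boundedness of $(\lambda_n(g_n))_n$ follows from the same telescope, so $(g_n)_n \in (G_n)_{\mathcal{C}}$, and the telescope also yields $D(x^{(k)}, [(g_n)_n]) \to 0$, producing the desired Raikov limit.
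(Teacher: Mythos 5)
Your proposal is correct and follows essentially the same route as the paper: a diagonal limit indexed by a decreasing chain of ultrafilter sets with empty intersection, with continuity in the ultraproduct verified via the conjugation decomposition $g_n^{-1}hg_n=(g_n^{-1}g_n^{(k)})((g_n^{(k)})^{-1}hg_n^{(k)})((g_n^{(k)})^{-1}g_n)$ and the inverse-Cauchy half handling the reverse conjugation. The only cosmetic difference is that you telescope over consecutive terms of a fast subsequence (getting $5\cdot 2^{-k}$) where the paper invokes the Cauchy estimate directly for all pairs of indices past a threshold (getting $\varepsilon/3+\varepsilon/3+\varepsilon/3$).
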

\begin{proof}
Suppose we have a sequence (of sequences) $((g_{n,m})_n)_m\subseteq (G_n)_\mathcal{C}$ of elements, resp. representatives from the equivalence classes, from the metric ultraproduct such that both the sequence and the sequence of its inverses are Cauchy. We shall show that the limit is in $(G_n)_\mathcal{C}$. The limit is constructed as in the case of normed vector spaces or groups with conjugacy-invariant norms. That is, let $(A_n)_n$ be a strictly decreasing sequence of sets from the ultrafilter $\mathcal{U}$ such that $\bigcap_n A_n=\emptyset$, and $(k_n)_n$ a strictly increasing sequence of natural numbers such that for every $m$ and every $i,j\geq k_m$ we have $$\forall n\in A_m (\lambda_n(g_{n,i}^{-1}\cdot g_{n,j})<1/2^m\wedge \lambda_n(g_{n,i}\cdot g_{n,j}^{-1})<1/2^m).$$
The limit sequence $(h_n)_n$ is defined so that for all $n\notin A_1$ we have $h_n=1$ and for any $m$ and $n\in A_m\setminus A_{m+1}$ we have $h_n=g_{n,k_m}$. We claim that $(h_n)_n\in (G_n)_\mathcal{C}$ and that it is the limit of $((g_{n,m})_n)_m$, while $(h^{-1}_n)_n$ is the limit of $((g^{-1}_{n,m})_n)_m$. The latter is verified as in the classical case of groups with conjugacy-invariant norms, so we only check the former, i.e. that $(h_n)_n\in (G_n)_\mathcal{C}$.

By definition, we must check that for every $\varepsilon>0$ there are $\delta>0$ and $A_\varepsilon\in\mathcal{U}$ such that for every $n\in A_\varepsilon$ and $f_n\in G_n$ with $\lambda_n(f_n)<\delta$ we have $\lambda_n(h_n^{-1}\cdot f_n\cdot h_n)<\varepsilon$ and $\lambda_n(h_n\cdot f_n\cdot h_n^{-1})<\varepsilon$. Pick $l$ such that $1/2^l<\varepsilon/3$. Since $(g_{n,k_l})_n\in (G_n)_\mathcal{C}$ we have that there is some $A'\in\mathcal{U}$ and some $\delta>0$ such that for every $n\in A'$ and every $f_n\in G_n$ with $\lambda_n(f_n)<\delta$ we have
\begin{equation}\label{eq1}
\lambda_n(g_{n,k_l}^{-1}\cdot f_n\cdot g_{n,k_l})<\varepsilon/3,\quad
\lambda_n(g_{n,k_l}\cdot f_n\cdot g_{n,k_l}^{-1})<\varepsilon/3.
\end{equation}
Set $A_\varepsilon=A'\cap A_l\in\mathcal{U}$. For any $n\in A_\varepsilon\subseteq A_l$ and any $i>k_l$ we thus have 
\begin{equation}\label{eq2}
\lambda_n(g_{n,i}^{-1}\cdot g_{n,k_l})<\varepsilon/3\wedge \lambda_n(g_{n,i}\cdot g_{n,k_l}^{-1})<\varepsilon/3.
\end{equation}
Putting \eqref{eq1} and \eqref{eq2} together we get that for every $n\in A_\varepsilon$ and every $f_n\in G_n$ with $\lambda_n(f_n)<\delta$ we have $$\lambda_n(h_n^{-1}\cdot f_n\cdot h_n)\leq \lambda_n(h_n^{-1}\cdot g_{n,k_l})+\lambda_n(g_{n,k_l}^{-1}\cdot f_n\cdot g_{n,k_l})+\lambda_n(g_{n,k_l}^{-1}\cdot h_n)<$$ $$\varepsilon/3+\varepsilon/3+\varepsilon/3=\varepsilon.$$
Analogous inequalities give that $$\lambda_n(h_n\cdot f_n\cdot h_n^{-1})<\varepsilon,$$ and so we are done.
\end{proof}
\subsection{Some pathological examples}
We finish this section by presenting some pathological examples which show that metric ultraproducts of groups with general continuous norms are rather delicate. We show, as mentioned in the introduction, that a metric ultraproduct of normed topological groups may collapse to a trivial group. Also, we show that for some normed topological groups it may happen that their metric ultrapower is the group itself.

Let us start with the former.
\begin{lem}
There exists a sequence of non-trivial normed topological groups $(G_n,\lambda_n)$ such their metric ultraproduct, over any non-principal ultrafilter, is a trivial group.
\end{lem}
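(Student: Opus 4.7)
The plan is to take each $G_n$ to be the free group $F_2=\langle a,b\rangle$ equipped with the rescaled word-length norm $\lambda_n(g):=|g|/n$, where $|g|$ denotes the length of the freely reduced representative of $g$ in the symmetric generators $\{a^{\pm 1},b^{\pm 1}\}$. Each $(G_n,\lambda_n)$ is a non-trivial discrete topological group whose norm is trivially continuous, and one checks immediately that any null sequence (i.e.\ $\lambda_n(g_n)\to 0$, equivalently $|g_n|=o(n)$) lies in $(G_n)_{\mathcal{C}}$. So the whole metric ultraproduct will be trivial provided no sequence with positive $\mathcal{U}$-limit norm belongs to $(G_n)_{\mathcal{C}}$.

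Assume then for contradiction that $(g_n)\in(G_n)_{\ell_\infty}$ has $c:=\lim_{\mathcal{U}}\lambda_n(g_n)>0$, so that on a set $A\in\mathcal{U}$ we have $g_n\neq 1$ and $|g_n|/n\to c$. The key ingredient is a standard free-group cancellation observation: the last letter of $g_n$ is one of the four symbols in $\{a^{\pm 1},b^{\pm 1}\}$, and only two of them (that last letter and its formal inverse) can produce cancellation at a boundary of the concatenation $g_n\, h\, g_n^{-1}$. Hence for each $n\in A$ there is a letter $h_n\in\{a^{\pm 1},b^{\pm 1}\}$ such that $g_n h_n g_n^{-1}$ is already freely reduced, of length exactly $2|g_n|+1$. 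For this choice, $\lambda_n(h_n)=1/n\to 0$ while $\lambda_n(g_n h_n g_n^{-1})=(2|g_n|+1)/n\to 2c>0$.

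Plugging $\varepsilon:=c$ into the definition of continuity in the ultraproduct, this furnishes a direct violation: for any $\delta>0$ and any $B\in\mathcal{U}$, the set $A\cap B\cap\{n:1/n<\delta\}\cap\{n:\lambda_n(g_n h_n g_n^{-1})>\varepsilon\}$ still belongs to $\mathcal{U}$ and therefore produces an index $n$ and an element $h_n\in G_n$ with $\lambda_n(h_n)<\delta$ and $\lambda_n(g_n h_n g_n^{-1})>\varepsilon$. Thus $(g_n)\notin(G_n)_{\mathcal{C}}$, and the metric ultraproduct collapses for every non-principal ultrafilter $\mathcal{U}$. The only delicate point is that the witness $h_n$ has to be allowed to depend on $n$ (no single universal generator works, since the last letter of $g_n$ may vary), but this is explicitly permitted by the definition, after which the argument is elementary.
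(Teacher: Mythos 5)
Your proof is correct and is essentially the same as the paper's: the same groups $(\mathbb{F}_2, |\cdot|/n)$, and the same cancellation argument showing that conjugating a single generator of norm $1/n$ by $g_n$ produces an element of norm about $2\lim_{\mathcal{U}}\lambda_n(g_n)$, violating continuity in the ultraproduct. The only cosmetic difference is that you conjugate as $g_n h g_n^{-1}$ and choose $h_n$ to avoid cancellation with the last letter of $g_n$, whereas the paper uses $g_n^{-1} x g_n$ and avoids the first letter.
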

\begin{proof}
For every $n\in\Nat$, let $G_n$ be $\mathbb{F}_2$, the free group on two free generators. Let $|\cdot|$ be the canonical length function on $\mathbb{F}_2$, i.e. identifying $\mathbb{F}_2$ with the set of reduced words over the alphabet $\{a,b,a^{-1},b^{-1}\}$, $|x|$, for $x\in \mathbb{F}_2$, is the length of $x$ as a word. Let $\lambda_n$ be the rescaling $|\cdot|/n$. We claim this sequence is as desired. Fix any non-principal ultrafilter $\mathcal{U}$ on $\Nat$. Suppose there exists a non-trivial element $(g_n)_n\in (G_n)_\mathcal{U}$ in the metric ultraproduct, or rather its representative from $(G_n)_{\ell_\infty}$. Since $\lambda_\mathcal{U}((g_n))>0$, there exist $\varepsilon>0$ and $A\in\mathcal{U}$ such that for all $n\in A$, $\lambda_n(g_n)>\varepsilon$. Then we claim that there are no $\delta>0$ and $B\in\mathcal{U}$ such that for all $n\in B$ and $h_n\in G_n$ with $\lambda_n(h_n)<\delta$ we have $\lambda_n(g^{-1}_n\cdot h_n\cdot g_n)<\varepsilon$, thus violating the condition that $(g_n)_n$ is continuous in the ultraproduct. Suppose otherwise and fix corresponding $\delta>0$ and $B\in\mathcal{U}$. We may suppose that $B\subseteq A$. Pick $n\in B$ such that $1/n<\delta$. Recall that $g_n$ is some reduced word $w_1\ldots w_m$ over the alphabet $\{a,b,a^{-1},b^{-1}\}$. Take $x\in\{a,b\}$ such that $x\neq w_1$ and $x\neq w_1^{-1}$. We have that $\lambda_n(x)=1/n<\delta$. However, $\lambda_n(g_n^{-1}\cdot x\cdot g_n)>2\varepsilon+1/n$. Indeed, by assumption there is no cancelation in the word $w=w^{-1}_m\ldots w^{-1}_1 x w_1\ldots w_m$, thus $g_n^{-1}\cdot h\cdot g_n$ corresponds to the reduced word $w$. This finishes the proof.
\end{proof}

Next, we present an example of a normed topological group whose metric ultrapower is equal to the original group itself.
\begin{lem}
Consider the group $S_\infty$ of all permutations of $\Nat$ with the norm $\lambda(p)$, for $p\in S_\infty$, defined as $\max\{1/n:p(n)\neq n\}$, which was already considered in this section. Then its metric ultrapower (over any ultrafilter on $\Nat$) is equal to $S_\infty$ itself.
\end{lem}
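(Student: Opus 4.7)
The plan is to show that the diagonal embedding $S_\infty\hookrightarrow (S_\infty)_\mathcal{U}$ is surjective; injectivity is immediate, since for distinct $p,p'\in S_\infty$ the value $\lambda(p^{-1}\cdot p')$ is a fixed positive number, so the ultralimit of the constant sequence is non-zero. So I fix a representative $(p_n)_n\in (S_\infty)_\mathcal{C}$ of an arbitrary class in the ultrapower and construct $p\in S_\infty$ with $\lim_\mathcal{U}\lambda(p_n^{-1}\cdot p)=0$.

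The crucial first step is to decode what ``continuous in the ultraproduct'' means for this particular norm. Note that $\lambda(h)\le\delta$ says exactly that $h$ fixes $\{1,\ldots,\lfloor 1/\delta\rfloor\}$ pointwise, and $\lambda(g)<\varepsilon$ means $g$ fixes $\{1,\ldots,\lceil 1/\varepsilon\rceil\}$ pointwise. Since $(p_n^{-1}\cdot h\cdot p_n)(m)=m$ iff $h$ fixes $p_n(m)$, and $h$ with $\lambda(h)\le\delta$ ranges over all permutations of $\{m:m\ge 1/\delta\}$, the uniform bound $\lambda(p_n^{-1}\cdot h\cdot p_n)<\varepsilon$ in such $h$ is equivalent to $p_n(\{1,\ldots,K\})\subseteq\{1,\ldots,L\}$ for suitable $K=K(\varepsilon)$ and $L=L(\delta)$; the parallel condition on $p_n\cdot h\cdot p_n^{-1}$ gives the analogous bound for $p_n^{-1}$. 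Unwinding \eqref{contultprod}, membership in $(S_\infty)_\mathcal{C}$ becomes the concrete combinatorial statement: for every $K\in\Nat$ there are $L\in\Nat$ and $A\in\mathcal{U}$ such that $p_n(\{1,\ldots,K\})\cup p_n^{-1}(\{1,\ldots,K\})\subseteq\{1,\ldots,L\}$ for every $n\in A$.

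Granted this, for each $m\in\Nat$ the sequence $(p_n(m))_n$ takes values in a finite set on a $\mathcal{U}$-set, so $p(m):=\lim_\mathcal{U} p_n(m)$ is a well-defined element of $\Nat$, and there exists $A_m\in\mathcal{U}$ with $p_n(m)=p(m)$ for every $n\in A_m$; analogously define $q(m):=\lim_\mathcal{U} p_n^{-1}(m)$. On $A_m$ we have $p_n^{-1}(p(m))=p_n^{-1}(p_n(m))=m$, whence $q(p(m))=m$, and symmetrically $p(q(m))=m$, so $p$ is a bijection of $\Nat$ and thus belongs to $S_\infty$. For any $K\in\Nat$, the set $\bigcap_{m\le K}A_m$ lies in $\mathcal{U}$ and on it $p_n(m)=p(m)$ for all $m\le K$, so $\lambda(p_n^{-1}\cdot p)<1/K$ on this set; letting $K$ vary gives $\lim_\mathcal{U}\lambda(p_n^{-1}\cdot p)=0$, so $(p_n)_n$ and the constant sequence $(p)_n$ represent the same class in the ultrapower.

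I expect the main obstacle to be the first step: faithfully translating the abstract condition \eqref{contultprod} into the concrete statement that initial segments under both $p_n$ and $p_n^{-1}$ are uniformly bounded on a $\mathcal{U}$-set, with some care taken regarding strict versus non-strict inequalities and the discrete range $\{0\}\cup\{1/n:n\in\Nat\}$ of the norm. Once that equivalence is secured, the construction of $p$ and the verification that $(p_n)_n$ is equivalent to $(p)_n$ reduce to routine manipulation of ultrafilter limits of sequences eventually supported in finite sets.
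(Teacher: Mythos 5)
Your proposal is correct and follows essentially the same route as the paper: both arguments hinge on translating ``continuous in the ultraproduct'' into the statement that $p_n(\{1,\ldots,K\})$ and $p_n^{-1}(\{1,\ldots,K\})$ are uniformly bounded on a set of the ultrafilter, and then extracting a single permutation to which the sequence is $\mathcal{U}$-equivalent. The only (cosmetic) difference is that you define $p(m)=\lim_\mathcal{U}p_n(m)$ coordinatewise, whereas the paper first produces approximating permutations $s_n$ agreeing with $p_i$ on $\{1,\ldots,n\}$ and takes their limit.
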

 \begin{proof}
Let us start with an observation.
\begin{observation}\label{obs3}
Take any $p\in S_\infty$. For any $n$ we want to compute the $\delta>0$ such that whenever $\lambda(s)<\delta$ then we have $\lambda(p^{-1}\cdot s\cdot p)< 1/n$, and conversely that there exists $s\in S_\infty$ such that $\lambda(s)\geq \delta$ and $\lambda(p^{-1}\cdot s\cdot p)\geq 1/n$. Set $m=\max\{p(l):l\leq n\}$. We claim that we may take $\delta=1/m$. Indeed, suppose that for some $s\in S_\infty$ we have $\lambda(s)<1/m$. Then $s\upharpoonright \{1,\ldots,m\}=\mathrm{id}$. It follows that $p^{-1}\cdot s\cdot p\upharpoonright \{1,\ldots,n\}=\mathrm{id}$, thus $\lambda(p^{-1}\cdot s\cdot p)<1/n$. Conversely, let $m'=p(n)\leq m$. Let $s\in S_\infty$ be arbitrary with the property that $s(m')>m$. Then $\lambda(s)\geq 1/m$ and $p^{-1}\cdot s\cdot p\upharpoonright \{1,\ldots,n\}\neq\mathrm{id}$, thus $\lambda(p^{-1}\cdot s\cdot p)\geq 1/n$.\\
\end{observation}

Now consider the ultrapower of $S_\infty$ with respect to some non-principal ultrafilter $\mathcal{U}$ (on $\Nat$). Let $(p_n)_n$ be some sequence representing an element of the ultrapower. We claim that $$\forall n\: \exists m\: \exists A\in \mathcal{U}\:\forall i\in A\:\forall l\leq n\:(p_i(l)\leq m).$$ Otherwise, we would get that there is $n$ such that for every $m$ there is $A\in\mathcal{U}$ such that for every $i\in A$ we have $p_i(n)>m$. Note that the preceding formula is not a formal negation of the formula above, however it is equivalent to it. However, it follows from Observation \ref{obs3} that such a sequence is not continuous in the ultrapower. The same argument gives that $$\forall n\: \exists m\: \exists A\in \mathcal{U}\:\forall i\in A\:\forall l\leq n\:(p^{-1}_i(l)\leq m).$$
Now it follows that for any $n$ there is $A_n\in\mathcal{U}$ and $s_n\in S_\infty$ such that for every $i\in A$ and every $l\leq n$ we have $p_i(l)=s_n(l)$ and $p^{-1}_i(l)=s^{-1}_n(l)$. A straightforward argument gives that $(s_n)_n$ converges to some $s\in S_\infty$, and that $(p_n)_n$ is equal to the constant sequence consisting of $s$ in the ultrapower.
\end{proof}
\section{Proof of the main theorems}
In the last section, we prove Theorems \ref{main1} and \ref{main2}. The meaning of Theorem \ref{main2} is now clear after we have defined metric ultraproducts of normed groups in the previous section. We precisely restate Theorem \ref{main1} here. We start with a definition first.
\begin{defin}
Let $(G,\lambda)$ and $(H,\rho)$ be normed groups, and let $F\subseteq G$ be a finite subset and $\varepsilon>0$ arbitrary. We say that $\phi:F\rightarrow H$ is an $\varepsilon$-homomorphism if 
\begin{itemize}
\item $\rho(\phi(g\cdot h)^{-1}\cdot \phi(g)\cdot\phi(h))<\varepsilon$, for all $g,h\in F$ such that $g\cdot h\in F$;
\item $|\rho(\phi(g))-\lambda(g)|<\varepsilon$ for all $g\in F$.
\end{itemize}
\end{defin}
\begin{thm}\label{main1_restate}
There exists a countable sequence $(G_n,\lambda_n)_n$ of finite normed groups such that for any normed topological group $(H,\rho)$, in particular for any discrete group, and any $\varepsilon>0$ and any finite subset $F\subseteq H$ there exists $i_0$ such that for all $i\geq i_0$ there is an $\varepsilon$-homomorphism $\phi:F\rightarrow G_i$.

Moreover, we may require that for every $f\in F$, $\Gamma_{\phi(f)}^{G_i}\leq 2\Gamma_f^H+\varepsilon\mathrm{id}$.
\end{thm}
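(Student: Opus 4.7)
The plan is to exhibit $(G_n,\lambda_n)_n$ as direct products $G_n = H_1 \oplus \cdots \oplus H_n$ equipped with the sup-norm, where $(H_j)_j$ is a countable enumeration of finite normed ``building block'' groups realizing all sufficiently rich rational patterns of finitely generated normed-group data. With the sup-norm, each factor embeds isometrically in the product, and the minimal modulus of continuity of a coordinate element $(1,\ldots,1,h,1,\ldots,1)$ in $G_n$ equals that of $h$ in its factor $H_j$. Hence the task reduces to: for every normed topological group $(H,\rho)$, every finite $F \subseteq H$, and every $\varepsilon > 0$, produce some $H_j$ admitting an $\varepsilon$-homomorphism $\phi : F \to H_j$ satisfying $\Gamma^{H_j}_{\phi(f)} \leq 2\Gamma_f^H + \varepsilon\cdot\mathrm{id}$ for each $f \in F$.

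By restricting $H$ to the countable subgroup $\langle F\rangle$ and invoking Observation~\ref{obs1}, I may present $\langle F\rangle$ as a quotient $F_k/\ker\pi$ of the free group on generators $x_1,\ldots,x_k$, with induced seminorm $\sigma = \rho\circ\pi$; the set $F$ corresponds to the images $f_i = \pi(x_i)$. A rational pattern is then a tuple $(k, W, q)$ where $W \subseteq F_k$ is a finite symmetric set containing $1$ and closed under the conjugations $w \mapsto x_i^{\pm 1} w x_i^{\mp 1}$ relevant to the moduli control, $q\colon W\to\mathbb{Q}_{\geq 0}$ is a rational function satisfying the seminorm axioms on $W$, together with rational piecewise-linear candidate moduli at each generator. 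For every such pattern I would construct $H_j$ via residual finiteness of $F_k$: choose a finite-index normal subgroup $N \triangleleft F_k$ such that the finite set $W\cdot W^{-1}\setminus\{1\}$ is disjoint from $N$ (so $W$ injects into $F_k/N$), while also avoiding the further finitely many ``cancelling'' elements whose presence would shrink $\sigma$-norms of elements of $W$ below the intended value. Define the seminorm on $F_k/N$ by
\[
\lambda(\bar g) := \inf\Bigl\{\sum_{m} q(w_m) : \bar g = \bar w_1\bar w_2\cdots\bar w_\ell,\ w_m\in W\Bigr\},
\]
and quotient by its null subgroup (Fact~\ref{makequotient}) to obtain the finite normed group $H_j$.

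The main obstacle is verifying the modulus bound. Given $g' \in H_j$ with $\lambda(g') \leq r$, pick a near-optimal decomposition $g' = \bar v_1\cdots\bar v_\ell$ with $\sum_m q(v_m) \leq r + \varepsilon$; conjugation by $\phi(f) = \bar x_i$ (where $f = f_i$) yields $\prod_m \overline{x_i^{-1} v_m x_i}$. By closure of $W$, each conjugate $x_i^{-1} v_m x_i$ lies in $W$ with $q$-value essentially $\sigma(x_i^{-1} v_m x_i) = \rho(f^{-1}\pi(v_m) f) \leq \Gamma_f^H(\sigma(v_m))$. The factor $2$ in $\Gamma^{H_j}_{\phi(f)}\leq 2\Gamma_f^H+\varepsilon\cdot\mathrm{id}$ provides the slack to aggregate these bounds over the (bounded-length) decomposition, exploiting the uniform estimate $\Gamma_f^H(s)\leq 2\rho(f)+s$ from the minimal MOC formula in Section~1; the additive $\varepsilon\cdot\mathrm{id}$ absorbs the $q$-versus-$\sigma$ discrepancy. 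The delicate quantitative point is to balance $N$ deep enough to make these inequalities sharp yet shallow enough for $F_k/N$ to be finite; this is handled by enumerating ever finer rational patterns and selecting $j$ correspondingly large.
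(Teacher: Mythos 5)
Your high-level skeleton (enumerate countably many ``rational patterns'', realize each in a finite normed group via residual finiteness of free groups, and bundle the blocks so that all but finitely many $G_n$ contain each block isometrically) is broadly parallel to the paper's Construction~\ref{main_constr}, and your direct-product-with-sup-norm packaging is a reasonable substitute for the paper's free products. But the core technical step --- the modulus-of-continuity bound $\Gamma^{G_i}_{\phi(f)}\leq 2\Gamma_f^H+\varepsilon\,\mathrm{id}$ --- has a genuine gap, and it is exactly the gap that Proposition~\ref{prop1} (the match construction) exists to close. Your verification conjugates a near-optimal decomposition $g'=\bar v_1\cdots\bar v_\ell$ term by term and bounds the cost by $\sum_m\Gamma_f^H(q(v_m))$. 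The minimal MOC is not subadditive: the only uniform estimate is $\Gamma_f^H(s)\leq 2\rho(f)+s$, whose additive constant $2\rho(f)$ gets multiplied by $\ell$, and $\ell$ is of order $r/q_{\min}$, where $q_{\min}$ (the smallest norm of a nontrivial element of $W$) is not controlled by $\rho(f)$ or $\Gamma_f^H$. So $\sum_m\Gamma_f^H(q(v_m))$ can be of order $2\ell\rho(f)+r$, which is not $\leq 2\Gamma_f^H(r)+\varepsilon r$ whenever $\Gamma_f^H(r)\ll 2\rho(f)+r$; the fallback decomposition $\bar x^{-1}\cdot(\bar v_1\cdots\bar v_\ell)\cdot\bar x$ costs $2\rho(f)+r$ and fails for the same reason in the regime $r<2\rho(f)/(1+\varepsilon)$. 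This is not a presentational defect: the paper explicitly observes at the end of Step~1 of Proposition~\ref{prop1} that the norm generated naively from a finite partial norm does \emph{not} keep $2\Gamma$ as a MOC of the generators, and Steps~2--3 (defining the norm through matches, so that conjugation by a generator is charged $\Gamma(\lambda(\cdot))$ applied to the \emph{whole} inner word rather than distributed over its letters, and then re-finitizing using properness) are precisely the repair. Your proposal contains no analogue of this mechanism, so the claimed modulus bound is unproved and, for the norm you define, false in general.

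Two further problems with the setup. First, a finite symmetric $W\subseteq F_k$ ($k\geq 2$) that is literally closed under $w\mapsto x_i^{\pm1}wx_i^{\mp1}$ must equal $\{1\}$: in a free group the conjugation orbit of any $w\notin\langle x_i\rangle$ under $x_i$ is infinite, so the closure hypothesis on your patterns is vacuous, and any weakening to ``finitely many levels of closure'' reinstates the aggregation problem above because optimal decompositions have unboundedly many factors. Second, $\sigma=\rho\circ\pi$ is only a seminorm on $F_k$, so its balls are infinite and ``taking $W$ large enough'' is not well founded; the paper deals with this by approximating $\rho$ \emph{from above} by genuine rational finitely generated (hence proper) norms $\sigma_n$ on free groups (the Claim inside the proof of Theorem~\ref{final_thm}) and only then quoting Propositions~\ref{prop1} and~\ref{prop2}. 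Finally, note that the paper does not prove Theorem~\ref{main1_restate} directly: it first proves the ultraproduct embedding theorem (Theorem~\ref{final_thm}) and then extracts the finitary statement by a \L o\'s-type argument over an ultrafilter concentrating on the putative bad set. Your direct finitary route would be acceptable in principle, but only after the match construction (or an equivalent device) is supplied to make the modulus bound true.
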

The rest is devoted to the proofs of the main theorems. We prove Theorem \ref{main2} and then show how Theorem \ref{main1_restate} follows.

Again, we need some definitions before we can continue.
\begin{defin}
Let $G$ be a finitely generated group. Let $A\subseteq G$ be some finite symmetric subset, i.e. $A=A^{-1}=\{a^{-1}:a\in A\}$, containing the unit $1_G$ and generating $G$. Consider a function $\lambda':A\rightarrow \Rea$ satisfying the following conditions:
\begin{itemize}
\item For $x\in A$, $\lambda'(x)=0$ if and only if $x=1_G$;
\item For any $x\in A$, $\lambda'(x)=\lambda'(x^{-1})$.
\end{itemize}
Then we call $\lambda'$ a partial pre-norm. If $\lambda'$ additionally satisfies condition
\begin{itemize}
\item For any $x_1,\ldots,x_n\in A$ such that $x_1\cdot \ldots\cdot x_n\in A$, $\lambda'(x\cdot\ldots\cdot x_n)\leq \sum_{i=1}^n \lambda'(x_i)$
\end{itemize}
then we call $\lambda'$ a partial norm.
\end{defin}
\begin{constr}
Let $G$ be a group, $A$ a finite symmetric subset containing the unit and generating $G$, and let $\lambda':A\rightarrow \Rea$ be a partial pre-norm. Then the following formula defines a norm $\lambda$ on $G$. For any $x\in G$ we set $$\lambda(x)=\min\{\lambda'(x_1)+\ldots+\lambda'(x_n):x_1,\ldots,x_n\in A, x=x_1\cdot\ldots\cdot x_n\}.$$

Indeed, it immediately follows from the definition that for any $x,y\in G$ we have $\lambda(x\cdot y)\leq \lambda(x)+\lambda(y)$. Since $\lambda'$ was a symmetric function vanishing at $1_G$ we get that also $\lambda$ is symmetric and vanishes at $1_G$.

We shall call such $\lambda$ \emph{finitely generated}.

Moreover, if $G$ is a finitely generated free group then observe that if $\lambda'$ is a partial norm then $\lambda$ extends $\lambda'$, and $\lambda$ is proper.
\end{constr}

Now suppose we have finitely many finitely generated free groups $F_1,\ldots,F_n$. For each $i\leq n$, suppose that $F_i$ is freely generated by $x_{i,1},\ldots,x_{i,n_i}$. Suppose also that for each $i\leq n$ there is a pre-norm $\lambda'_i:$ defined on some finite symmetric $A_i\subseteq F_i$ that contains $\{1,x_{i,1},\ldots,x_{i,n_i}\}$, which thus defines some norm $\lambda_i$ on $F_i$. $(F_i,\lambda_i)$ is a discrete normed group, thus a topological group. For any $i\leq n$ and $j\leq n_i$ denote by $\Gamma_i^j$ the minimal MOC for $x_{i,j}$ in $F_i$. That is, for any $r\in\left[0,\infty\right)$ define $$\Gamma_i^j(r)=\max\{r,\sup\{\lambda_i(x_{i,j}^\varepsilon\cdot g\cdot x_{i,j}^{-\varepsilon}):g\in F_i,\lambda_i(g)\leq r,\varepsilon\in\{1,-1\}\}.$$

Now consider the free product $F=F_1\ast\ldots\ast F_n$. We would like to define a finitely generated norm $F$ which extends the particular norms on $F_i$'s in such a way that the minimal moduli of continuity of the free generators in $F$ are `close' to the minimal moduli of the generators in the appropriate $F_i$'s. Before doing so, we need the following definition, first used in \cite{DiGao} and implicitly present already in \cite{SiUs}.
\begin{defin}[Match]
Let $A$ be some symmetric alphabet, i.e. if $a\in A$, then also its formal inverse $a^{-1}$ belongs to $A$. Let $w=w_m\ldots w_{m+n}$ be some word over $A$, for technical reasons enumerated by an arbitrary interval of natural numbers. Denote by $J$ that interval, i.e. $J=\{m,\ldots,m+n\}$. A \emph{match} on $J$ for $w$ is a bijection $\rho:J\rightarrow J$ such that
\begin{itemize}
\item $\rho\circ\rho=\mathrm{id}_J$, i.e. for every $i\in J$ we have $\rho\circ\rho(i)=i$,
\item for no $i,j\in J$ we have $i<j<\rho(i)<\rho(j)$,
\item if $\rho(i)\neq i$, for some $i\in J$, then $w_i=w^{-1}_{\rho(i)}$.
\end{itemize}
\end{defin}
Notice that for any match $\rho$ on $J$ for a word $w$ enumerated by $J$ and for any $i\in J$ such that $i<\rho(i)$, we have that $\rho\upharpoonright [i+1,\ldots,\rho(i)-1]$ is a match on $[i+1,\ldots,\rho(i)-1]$ for the corresponding subword of $w$.

Also, if $J$ and $K$ are disjoint intervals such that $\max J +1=\min K$, and $\rho_J$ is a match on $J$ for some word $w_J$ while $\rho_K$ is a match on $K$ for some word $w_K$, then $\rho_J\cup\rho_K$ is a match on $J\cup K$ for $w_J w_K$.\\

The reader should view a match $\rho$ for some word $w$ as a way how to build $w$ from its subwords by means of concatenation and conjugation. For example, for a word $w=a^{-1}bca$ and a match $\rho(1)=4,\rho(4)=1,\rho(2)=2,\rho(3)=3$ for $w$ one sees $w$ as being built first by concatenating letters $b$ and $c$ to obtain the word $bc$, and then conjugating $bc$ to obtain $a^{-1}bca$.

Now we are ready to state the proposition.
\begin{prop}\label{prop1}
There exists a finitely generated norm $\lambda$ on $F$ satisfying
\begin{itemize}
\item that for any $i\leq n$, $\lambda\upharpoonright F_i=\lambda_i$, i.e. $\lambda$ extends $\lambda_i$,
\item for every $i\leq n$ and $j\leq n_i$ and any $y\in F$, $\varepsilon\in\{1,-1\}$ we have $\lambda(x_{i,j}^\varepsilon\cdot y\cdot x_{i,j}^{-\varepsilon})\leq 2\Gamma_i^j(\lambda(y))$, i.e. $2\Gamma_i^j$ is a MOC for $x_{i,j}$ (and $x_{i,j}^{-1}$) in $F$.
\end{itemize}
\end{prop}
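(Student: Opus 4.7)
The plan is to apply the Construction preceding the proposition to a carefully chosen finite symmetric $A\subseteq F$ equipped with a partial pre-norm $\lambda'\colon A\to\mathbb R$. One starts from $A_0:=\bigcup_i A_i$ with $\lambda'|_{A_i}:=\lambda'_i$, which is a consistent partial pre-norm because in the free product the $F_i$'s intersect pairwise only at $1_F$. This first candidate already yields condition (i), but fails condition (ii) for $y\in F_\ell\setminus\{1\}$, $\ell\neq i$, with $\lambda(y)$ smaller than $\lambda'(x_{i,j})$: the unavoidable overhead $2\lambda'(x_{i,j})$ in the naive decomposition of $x_{i,j}^\varepsilon y x_{i,j}^{-\varepsilon}$ then exceeds the target $2\Gamma_i^j(\lambda(y))$. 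Crucially, once $\lambda(y)\geq 2\lambda'(x_{i,j})$ the bound $2\lambda'(x_{i,j})+\lambda(y)\leq 2\Gamma_i^j(\lambda(y))$ holds automatically from $\Gamma_i^j(r)\geq r$, so only $y$'s with $\lambda(y)$ below an explicit threshold are troublesome; the norm from $A_0$ being proper (by the Construction's last remark, since $F$ is free and $\lambda'$ is a partial norm), there are only finitely many such troublesome $y$. The remedy is to enlarge $A_0$ by all these exceptional conjugates $x_{i,j}^\varepsilon y x_{i,j}^{-\varepsilon}$ together with their inverses, assigning them the $\lambda'$-value $2\Gamma_i^j(\lambda(y))$; because below the threshold one has $2\Gamma_i^j(\lambda(y))\leq 2\lambda'(x_{i,j})+\lambda(y)$, the triangle-type axioms for a partial pre-norm are preserved.

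Condition (i) is verified using the normal form theorem for the free product. Given an optimal $A$-decomposition $x=a_1\cdots a_m$ of $x\in F_i$, replace every added exceptional conjugate by its three-letter expansion in $A_0$; by the choice of its $\lambda'$-value this does not raise the total cost. The decomposition then uses only letters of $A_0$; consolidating maximal runs of consecutive letters lying in the same free factor via subadditivity of $\lambda_\ell$ produces $x=b_1\cdots b_p$ with $b_s\in F_{\ell_s}$ and $\ell_s\neq\ell_{s+1}$. Since $x\in F_i$, the normal form theorem forces iterated collapses of trivial syllables (and subsequent merging of neighbouring same-factor blocks) until only an $A_i$-decomposition of $x$ remains, witnessing $\lambda_i(x)\leq\lambda(x)$; the reverse inequality is immediate from $A_i\subseteq A$.

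The main obstacle is condition (ii), and this is where the Match formalism is essential. Given an optimal $A$-decomposition $y=a_1\cdots a_m$, form the expanded word for $x_{i,j}^\varepsilon y x_{i,j}^{-\varepsilon}$ and equip it with a match that pairs the outermost $x_{i,j}^{\pm\varepsilon}$-letters and keeps intact the match already present in the decomposition of $y$. Regroup the $a_k$'s into maximal runs of in-$F_i$ letters and maximal runs of out-$F_i$ letters. An in-$F_i$ run of total $\lambda'$-cost $r$ contributes at most $\Gamma_i^j(r)$, since its conjugate stays in $F_i$ and admits an $A_i$-decomposition of that cost. An out-$F_i$ run of total cost $r$ is handled by one of two sub-routines: if $r\geq 2\lambda'(x_{i,j})$ the direct decomposition pays $2\lambda'(x_{i,j})+r\leq 2\Gamma_i^j(r)$, while otherwise the conjugate of the run is retrieved from the enlargement of $A$ at pre-stored cost $2\Gamma_i^j(r)$. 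A careful accounting, invoking $\Gamma_i^j(r)\geq r$ together with $\Gamma_i^j(r)\leq 2\lambda_i(x_{i,j})+r$ and the fact that the outer $x_{i,j}^{\pm\varepsilon}$-pair is charged only once per run (rather than once per letter), yields the target $\lambda(x_{i,j}^\varepsilon y x_{i,j}^{-\varepsilon})\leq 2\Gamma_i^j(\lambda(y))$; the factor $2$ in the bound is exactly what is needed to absorb the one-off conjugation overhead per out-$F_i$ run, and the case $\varepsilon=-1$ follows by the symmetric treatment.
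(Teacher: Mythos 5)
Your construction has a genuine gap, and it is located exactly where the paper's proof does its real work. You compute the set of ``troublesome'' $y$ (those with $\lambda(y)$ below the threshold $2\lambda'(x_{i,j})$) with respect to the norm generated by $A_0$, and then enlarge $A_0$ once. But adjoining the conjugates $x_{i,j}^{\varepsilon}yx_{i,j}^{-\varepsilon}$ as new generators with the small value $2\Gamma_i^j(\lambda(y))$ strictly lowers the norm of further elements below the threshold, and the conjugates of \emph{those} elements are not in your enlarged $A$. Concretely, take $F=\langle a\rangle\ast\langle b\rangle$ with $\lambda'(a)=1$ and $\lambda'(b)=1/10$. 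Both factors are infinite cyclic, so $\Gamma_1^1=\Gamma_2^1=\mathrm{id}$ and condition (ii) demands $\lambda(a^{\varepsilon}ya^{-\varepsilon})\leq 2\lambda(y)$ for \emph{every} $y\in F$. In your construction $b$ is troublesome for $a$, so $aba^{-1}$ is adjoined with value $2/10$ and hence $\lambda(aba^{-1})\leq 2/10$; but $aba^{-1}$ itself was not troublesome for the $A_0$-norm (its $A_0$-cost is $21/10$), so $a^{2}ba^{-2}$ was not adjoined. Writing $t_n=a^{n}ba^{-n}$, every generator of your $A$ with $\lambda'$-value at most $4/10$ lies in the subgroup freely generated by $t_{-1},t_0,t_1$ inside the normal closure of $b$, while $a^{2}ba^{-2}=t_2$ does not; hence $\lambda\bigl(a\cdot(aba^{-1})\cdot a^{-1}\bigr)>4/10\geq 2\Gamma_1^1\bigl(\lambda(aba^{-1})\bigr)$, violating (ii). Defining the troublesome set with respect to the final norm instead is circular, and iterating the enlargement need not terminate after one step. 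This bootstrapping problem is precisely what the paper's intermediate norm $\tilde\lambda$ is designed to kill: $\tilde\lambda$ is defined as a global minimum of $\lambda_\rho(w)$ over \emph{all} words $w$ and \emph{all} matches $\rho$, so that every conjugation by a generator, at every scale and every depth of nesting, is charged only $\Gamma_{i,j}$ of the full inner cost \emph{by definition}; only afterwards is $\tilde\lambda$ truncated to a finitely generated norm (Step 3), using your correct observation that beyond an explicit radius the trivial bound $2\lambda(x_{i,j})+\lambda(y)$ already suffices.

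Two secondary problems. First, the inequality you invoke, $2\Gamma_i^j(\lambda(y))\leq 2\lambda'(x_{i,j})+\lambda(y)$ for troublesome $y$, is false in general: the minimal MOC can be as large as $\Gamma_i^j(r)=2\lambda_i(x_{i,j})+r$ (take $F_i$ free of rank $2$ with one heavy and one light generator), and then the left side is about $4\lambda_i(x_{i,j})$ for small $r$. Moreover, for your expansion argument in the verification of (i) you actually need the \emph{reverse} inequality (the three-letter expansion must not cost more than the stored value), which fails for troublesome $y$ by the very definition of troublesome. Condition (i) is nonetheless true for a norm generated by such conjugates, but the correct argument is the paper's: retract an arbitrary $A$-decomposition onto $F_i$ by the projection killing the other free factors, which never raises the cost. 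Second, your run-by-run accounting for (ii) charges $\Gamma_i^j(r_s)$ or $2\Gamma_i^j(r_s)$ to each run separately and would need $\sum_s\Gamma_i^j(r_s)\leq\Gamma_i^j\bigl(\sum_s r_s\bigr)$ to reach the target $2\Gamma_i^j(\lambda(y))$; minimal moduli are not superadditive (three tiny in-$F_i$ runs can each cost nearly $2\lambda_i(x_{i,j})$ after conjugation), which is another reason the paper applies $\Gamma_{i,j}$ once to the whole inner word via the match rather than once per run.
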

\begin{remark}
We stress the importance of the second item in the proposition, i.e. that there are moduli of the free generators in the free product that are close to the minimal moduli of the free generators in the original free groups.
\end{remark}
\begin{proof}
The norm $\lambda$ will be constructed in three steps. In the first step, we shall construct a finitely generated norm on $F$ that extends each $\lambda_i$. However, this norm will not yet satisfy the second condition from the statement of the proposition. In the second step, we shall modify the norm from the first step so that it still extends $\lambda_i$'s and moreover the minimal moduli satisfy the second condition. While doing so, we shall however break the condition that the norm is finitely generated. That will be fixed in the last third step.\\

\noindent {\bf Step 1.} First, set $B'=\bigcup_{i=1}^n A_i$ and $\sigma'=\bigcup_{i=1}^n \lambda'_i$. We view $B'$ as a finite subset of $F=F_1\ast\ldots\ast F_n$. It is clearly symmetric, contains the generators and the unit and $\sigma'$ is a partial pre-norm. Moreover, the norm $\sigma$ on $F$ determined by $\sigma'$ extends $\lambda_i$ for each $i\leq n$. Indeed, take any $i\leq n$ and $y\in F_i$. It follows from the definition that $\sigma(y)\leq \lambda_i(y)$. Suppose that $\sigma(y)<\lambda_i(y)$. Then there exists $y_1,\ldots,y_m\in B'$ such that $y=y_1\cdot\ldots\cdot y_m$ and $\sigma(y)=\sum_{j=1}^m \sigma'(y_j)$. For any $j\leq m$ if $y_j\notin A_i$ then set $\tilde{y}_j=1$, if $y_j\in A_i$ then let $\tilde{y}_j=y_j$. Since $y\in F_i$ we have that $y=\prod_{j=1}^m \tilde{y}_j$ and $$\lambda_i(y)\leq \sum_{j=1}^m \sigma'(\tilde{y}_j)\leq \sum_{j=1}^m \sigma'(y_j)=\sigma(y),$$ a contradiction.

However, $2\Gamma_i^j$ is not necessarily a MOC for every $x_{i,j}$ (and its inverse) anymore. That will be fixed in the next step.\\

\noindent {\bf Step 2.} Denote by $I$ the set $\{(i,j):i\leq n,j\leq n_i\}$. Then for every $(i,j)\in I$ and $r\in\Rea$ we set $\Gamma_{i,j}(r)=\Gamma_i^j+r$. Clearly, for every $r$, $\Gamma_{i,j}(r)\geq 2r$ and $\Gamma_{i,j}\leq 2\Gamma_i^j$.

Now denote by $\bar{\mathbb{W}}$ the alphabet $\{x_{i,j}^\varepsilon:(i,j)\in I,\varepsilon\in\{1,-1\}\}\cup\{1\}$. We recall that the elements of $F$ correspond to reduced words over the alphabet $\bar{\mathbb{W}}$.

Let now $w=w_1\ldots w_n$ be any word (not necessarily reduced) over $\bar{\mathbb{W}}$ and let $\rho$ be a match on $I=\{1,\ldots,n\}$ for $w$. Then we define the value $\lambda_\rho(w)$ by induction on $n$.

For technical reasons we also allow the case when $n=0$, i.e. $w$ is an empty word. Then we set $\lambda_\rho(w)=0$.

Suppose that $n=1$. Then the match $\rho$ is trivial and we set $\lambda_\rho(w)=\sigma(w)=\sigma(w_1)$.

Suppose now that $n>1$ and we have defined $\lambda_\rho(w)$ for every $w$ of length less than $n$ and every match $\rho$ for $w$. If $\rho(1)=n$ then $w=x_{i,j}^\varepsilon \tilde{w} x_{i,j}^{-\varepsilon}$ for some $(i,j)\in I$ and $\varepsilon\in\{1,-1\}$, where $\tilde{w}=w_2\ldots w_{n-1}$. By $\rho'$ we denote the match $\rho\upharpoonright [2,\ldots,n-1]$ for $\tilde{w}$ and we set $$\lambda_\rho(w)=\Gamma_{i,j}(\lambda_{\rho'}(\tilde{w})).$$

Suppose now that $\rho(1)\notin \{1,n\}$. Then denote by $\rho_1$ the match $\rho\upharpoonright [1,\ldots,\rho(1)]$ for $w_1\ldots w_{\rho(1)}$ and by $\rho_2$ the match $\rho\upharpoonright [\rho(1)+1,\ldots,n]$ for $w_{\rho(1)+1}\ldots w_n$. And we set $$\lambda_\rho(w)=\lambda_{\rho_1}(w_1\ldots w_{\rho(1)})+\lambda_{\rho_2}(w_{\rho(1)+1}\ldots w_n).$$

Finally, suppose that $\rho=\mathrm{id}_{\{1,\ldots,n\}}$. Then we set $\lambda_\rho(w)=\sigma(w')$, where, we recall, $w'$ is the reduced word obtained from $w$; i.e. an element of $F$.

We may now define the norm $\tilde{\lambda}$ as follows. For any $x\in F$ we set $$\tilde{\lambda}(x)=\min\{\lambda_\rho(w):w'=x,\rho\text{ is a match on }\{1,\ldots,|w|\}\text{ for }w\}\}.$$\\
Note that since $F$ and $\sigma$ are finitely generated we may indeed use the minimum in the formula above.

It follows from the definition that $\tilde{\lambda}$ is a norm. Indeed, clearly it is symmetric, since $\sigma$ was symmetric, and it vanishes only at $1$ since the minimum is used in the definition. Take now some $x,y\in F$. Let $w_x$ be a word satisfying $w_x'=x$ and $\rho_x$ a match for $w_x$ such that $\lambda_{\rho_x}(w_x)=\tilde{\lambda}(x)$. We also take $\rho_y$ and $w_y$ with analogous properties for $y$. Then we get that $$\tilde{\lambda}(x\cdot y)\leq \lambda_{\rho_x\cup\rho_y} (w_x w_y)=\lambda_{\rho_x}(w_x)+\lambda_{\rho_y}(w_y).$$

We now show that for each $(i,j)\in I$ and $\varepsilon\in\{1,-1\}$ and any $y\in F$ we have $\tilde{\lambda}(x_{i,j}^\varepsilon\cdot y\cdot x_{i,j}^{-\varepsilon})\leq \Gamma_{i,j}(\tilde{\lambda}(y))$. Let $w_y$ be a word satisfying $w_y'=y$ and $\rho_y$ a match for $w_y$ such that $\lambda_{\rho_y}(w_y)=\tilde{\lambda}(y)$. Suppose that $|w_y|=l$ and let $\rho$ be a match on $\{1,\ldots,l+2\}$, defined by $\rho(1)=l+2$, $\rho(l+2)=1$ and for any $1<i<l+2$, $\rho(i)=\rho_y(i-1)$, for the word $x_{i,j}^\varepsilon w_y x_{i,j}^{-\varepsilon}$. Then $$\tilde{\lambda}(x_{i,j}^\varepsilon\cdot y\cdot x_{i,j}^{-\varepsilon})\leq \lambda_\rho(x_{i,j}^\varepsilon w_y x_{i,j}^{-\varepsilon})=\Gamma_{i,j}(\tilde{\lambda}(y)).$$
Moreover, we claim that $\tilde{\lambda}$ still extends $\lambda_i$ on $F_i$ for each $i\leq n$. This is done completely analogously as we did it for $\sigma$. That is, for any $i\leq n$ and $x\in F_i$, if $\tilde{\lambda}(x)<\lambda_i(x)$, then there would be a word $w_x$ over $\bar{\mathbb{W}}$ and a match $\rho$ for $w_x$ such that $w_x'=x$ and $\lambda_\rho(w_x)<\lambda_i(x)$. Replace in $w_x$ each letter from $\bar{\mathbb{W}}\setminus \{x_{i,j}^\varepsilon:j\leq n_i,\varepsilon\in\{1,-1\}\}$  by $1$ and denote the obtained word $v_x$. Since $x\in F_i$ we still have that $v_x'=x$ and it follows directly from definition that $\lambda_i(x)\leq\lambda_\rho(v_x)\leq \lambda_\rho(w_x)$.\\

\noindent {\bf Step 3.} Now, for every $(i,j)\in I$, let $r_{i,j}$ be (the minimal number) such that $\Gamma_{i,j}(r_{i,j})\geq 2\lambda_i(x_{i,j})+r_{i,j}$. Set $r'=\max_{(i,j)\in I} r_{i,j}$ and $r=\max_{(i,j)\in I} \Gamma_{i,j}(r')$.
Since, as it it straightforwards to check, $\tilde{\lambda}$ is still proper, the set $Y=\{y\in F:\tilde{\lambda}(y)\leq r\}$ is finite.

Finally, we define a finitely generated norm $\lambda$ with the desired properties. We let $\lambda$ be generated by values of $\tilde{\lambda}$ on $B=B'\cup Y$, i.e. for any $x\in F$ we set $$\lambda(x)=\min\{\tilde{\lambda}(x_1)+\ldots+\tilde{\lambda}(x_m):x_1,\ldots,x_m\in B,x=x_1\cdot\ldots\cdot x_m\}.$$

Clearly, $\lambda$ extends $\lambda_i$ on $F_i$ since $\lambda_i$ was generated by $B_i$, $B_i\subseteq B'\subseteq B$ and $\tilde{\lambda}$ extends $\lambda_i$. Also, $\lambda$ coincides with $\tilde{\lambda}$ on $Y$.

And moreover, for any $(i,j)\in I$ and $\varepsilon\in\{1,-1\}$ and any $y\in F$ we have $$\lambda(x_{i,j}^\varepsilon\cdot y\cdot x_{i,j}^{-\varepsilon})\leq \Gamma_{i,j}(\lambda(y)).$$ Indeed, take any $(i,j)\in I$, $\varepsilon\in\{1,-1\}$ and $y\in F$. If $\lambda(y)>r'$ then $\Gamma_{i,j}(\lambda(y))\geq 2\lambda(x_{i,j})+\lambda(y)$. However, $\lambda(x_{i,j}^\varepsilon\cdot y\cdot x_{i,j}^{-\varepsilon})\leq 2\lambda(x_{i,j})+\lambda(y)$. Thus suppose that $\lambda(y)\leq r'$. Then $y\in Y$ and $\lambda(y)=\tilde{\lambda}(y)$. We have that $\tilde{\lambda}(x_{i,j}^\varepsilon\cdot y\cdot x_{i,j}^{-\varepsilon})\leq \Gamma_{i,j}(\tilde{\lambda}(y))\leq \Gamma_{i,j}(r')\leq r$. It follows that $x_{i,j}^\varepsilon\cdot y\cdot x_{i,j}^{-\varepsilon}\in Y$ and thus $\lambda(x_{i,j}^\varepsilon\cdot y\cdot x_{i,j}^{-\varepsilon})=\tilde{\lambda}(x_{i,j}^\varepsilon\cdot y\cdot x_{i,j}^{-\varepsilon})\leq \Gamma_{i,j}(\lambda(y))$. That finishes the proof.
\end{proof}
\begin{remark}
Matches were originally used by Ding and Gao in \cite{DiGao} for a convenient computation of the Graev bi-invariant metric. The same authors then used matches for constructing also continuous norms, or continuous left-invariant metrics in \cite{DiGa}, which is close to the approach we used in the previous proposition. The reader is invited to compare the construction in Step 2 in Proposition \ref{prop1} with the construction in Definition 3.3 in \cite{DiGa}. The same constructions were later used by Ding in \cite{Ding} to construct surjectively universal Polish groups. A reader familiar with these results will recognize that our construction is, in a sense, a generalization of those in \cite{DiGa}.
\end{remark}
The next proposition is a, sort of, \emph{metric residual finiteness} of normed free groups. It shows that normed free groups may be approximated by finite normed groups. That will be used in producing the desired sequence of finite normed groups from Theorems \ref{main1_restate} and \ref{main2}.
\begin{prop}\label{prop2}
Let $F$ be a finitely generated free group with a norm $\lambda$. Then for any finite subset $A\subseteq F$ containing the generators there exists a \emph{finite} group $H$ with a norm $\sigma$ and a partial monomorphism $\phi:A\subseteq F\hookrightarrow H$ which is also an isometry with respect to $\lambda$ and $\sigma$.

Moreover, if $\lambda$ is proper and for any free generator $x$ of $F$ some MOC $\Gamma_x$ of $x$ in $F$ is given, such that it is eventually greater than $2\lambda(x)+\mathrm{id}$, then, provided $A$ is large enough, $\Gamma_x$ remains a MOC for $\phi(x)$ in $H$.
\end{prop}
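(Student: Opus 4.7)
The plan is to combine residual finiteness of the free group $F$ with the finitely-generated-norm construction. Given the finite symmetric $A\subseteq F$ (containing the generators and the unit), put $R:=\max_{g\in A}\lambda(g)$ and form the (finite) set
\[
W := (A^{-1}A\setminus\{1\})\,\cup\,\bigl\{g^{-1}\cdot g_1\cdots g_n : g,g_i\in A,\ g_1\cdots g_n\ne g\text{ in }F,\ \textstyle\sum_i\lambda(g_i)\le R\bigr\}.
\]
$W$ is finite because $\lambda$ has a positive lower bound $\varepsilon>0$ on $A\setminus\{1\}$, which forces $n\le R/\varepsilon$ in any bounded-sum tuple. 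Since free groups are residually finite, I can pick $N\triangleleft F$ of finite index with $N\cap W=\emptyset$; let $H:=F/N$ and let $\phi:F\to H$ be the quotient. The condition $N\cap A^{-1}A=\{1\}$ makes $\phi\upharpoonright A$ injective, hence a partial monomorphism.

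I will take $\sigma$ to be the finitely generated norm on $H$ coming from the partial pre-norm $\sigma'(\phi(g)):=\lambda(g)$ on the symmetric generating set $\phi(A)$, i.e.\
\[
\sigma(h) = \min\bigl\{\textstyle\sum_{i=1}^n\lambda(g_i) : g_i\in A,\ h=\phi(g_1\cdots g_n)\bigr\}.
\]
The bound $\sigma(\phi(g))\le\lambda(g)$ for $g\in A$ is immediate by the one-term decomposition. For the reverse, if $\phi(g)=\phi(g_1\cdots g_n)$ with $\sum\lambda(g_i)<\lambda(g)\le R$, then $g^{-1}\cdot g_1\cdots g_n\in N$; by construction of $W$ this forces $g_1\cdots g_n=g$ in $F$, so the triangle inequality in $F$ yields $\lambda(g)\le\sum\lambda(g_i)$, a contradiction. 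Hence $\sigma\circ\phi=\lambda$ on $A$, which gives the first part.

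For the moreover statement, properness of $\lambda$ makes every ball $B_s:=\{g\in F:\lambda(g)\le s\}$ finite, so I can require $A\supseteq B_R$ for any chosen $R$. Pick $r'$ so that $\Gamma_x(r)\ge 2\lambda(x)+r$ for all $r\ge r'$ and every free generator $x$ (possible by the eventual-growth hypothesis), and take $R\ge\max_x\Gamma_x(r')$. To verify that $\Gamma_x$ is an MOC for $\phi(x)$ in $H$, I split on $\sigma(h)$. If $\sigma(h)>r'$, the naive bound $\sigma(\phi(x)^{\pm1}h\phi(x)^{\mp1})\le 2\sigma(\phi(x))+\sigma(h)=2\lambda(x)+\sigma(h)\le\Gamma_x(\sigma(h))$ does the job. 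If $\sigma(h)\le r'$, I take an optimal decomposition $h=\phi(g_1\cdots g_n)$ with $\sum\lambda(g_i)=\sigma(h)$ and set $g:=g_1\cdots g_n\in F$; then $\lambda(g)\le\sigma(h)\le R$ puts $g\in A$, and the first-part isometry forces $\lambda(g)=\sigma(h)$. Finally $\lambda(x^{\pm1}gx^{\mp1})\le\Gamma_x(\lambda(g))\le\Gamma_x(r')\le R$ places the conjugate into $A$ as well, giving $\sigma(\phi(x)^{\pm1}h\phi(x)^{\mp1})=\lambda(x^{\pm1}gx^{\mp1})\le\Gamma_x(\sigma(h))$.

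The main obstacle will be designing the finite set $W$ so that avoiding it with a finite-index normal subgroup (via residual finiteness) is enough both to separate the elements of $A$ and to rule out all spurious ``shortcut'' factorizations; once $\sigma$ is genuinely isometric on $A$, the MOC part is a matter of tracking bounds, with the eventual domination $\Gamma_x(r)\ge 2\lambda(x)+r$ providing exactly the estimate that closes the large-$\sigma(h)$ case.
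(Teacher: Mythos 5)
Your proof is correct and follows essentially the same route as the paper's: residual finiteness of $F$ yields a finite quotient injective on a sufficiently large finite set, $\sigma$ is the norm generated by the values of $\lambda$ on $\phi(A)$, any norm-shortening factorization in $H$ is forced to come from an actual identity in $F$ where the triangle inequality rules it out, and the ``moreover'' part is handled by the same two-case split on $\sigma(h)$ versus $r'$. The only cosmetic difference is that you arrange injectivity by directly listing the finitely many possible discrepancy elements $g^{-1}g_1\cdots g_n$ with bounded $\lambda$-sum and killing them via a finite-index normal subgroup, whereas the paper bounds the number of factors by $M/m$ and asks for injectivity on a word-length ball.
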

\begin{proof}
Let $M=\max\{\lambda(x):x\in A\}$, let $m=\min\{\lambda(x):x\in A\setminus \{1\}\}$ and let $K=\max\{|x|:x\in A\}$. Let $B=\{x\in F: |x|\leq K\cdot \lceil\frac{M}{m}\rceil\}$. Note that $A\subseteq B$. Since $F$ is residually finite there exists a finite group $H$ together with a partial monomorphism $\phi:B\subseteq F\hookrightarrow H$. Moreover, we may assume that $\phi[B]$ generates $H$. Note that then in fact $\phi[A]$ generates $H$ as $A$ contains the (free) generators of $F$.

To simplify the notation, for every $x\in B$ denote by $x'$ the element $\phi(x)\in H$. For every $x\in A$ set $\sigma'(x')=\lambda(x)$. Let $\sigma$ be a norm on $H$ generated by $\sigma'$. It suffices to prove that for every $x\in A$ we have $\sigma(x')=\sigma'(x')$ ($=\lambda(x)$).

Note that although $\sigma'$ is a partial norm on $\phi[A]$, it does not follow automatically that $\sigma$ extends $\sigma'$ as $H$ is not free, it is a finite group.

Suppose that for some $x\in A$ we have $\sigma(x')<\sigma'(x')$. Then there exist $x_1,\ldots,x_n\in A$ such that $x'=x'_1\cdot\ldots\cdot x'_n$ and $$\sigma(x')=\sum_{i=1}^n \sigma'(x'_i)<\lambda(x).$$ We claim that $n\leq \frac{M}{m}$. Indeed, we have $\sigma'(x')\leq M$ and for every $i\leq n$, $\sigma'(x'_i)\geq m$. Thus if $n>\frac{M}{m}$, then $\sum_{i=1}^n \sigma'(x'_i)> m\cdot \frac{M}{m}> M$, a contradiction.

Moreover, for each $i\leq n$ we have $|x_i|\leq K$. Thus $|x_1\cdot\ldots\cdot x_n|\leq K\cdot\frac{M}{n}$. Consequently, $x_1\cdot\ldots x_n$ is in $B$, so in the domain of $\phi$. However, then it follows that $x=x_1\cdot\ldots\cdot x_n$ as $\phi$ is a partial monomorphism. But we have $$\lambda(x)\leq \lambda(x_1)+\ldots+\lambda(x_n)=\sigma'(x'_1)+\ldots+\sigma'(x'_n)=\sigma(x'),$$ a contradiction.\\

It remains to prove the `moreover' part from the statement of the proposition. Suppose that $(F,\lambda)$ is such that $\lambda$ is proper, e.g. $\lambda$ is finitely generated. Take some generator $x\in F$ and let $\Gamma_x$ be a MOC for $x$ in $F$ such that there is some $r'$ such that $\Gamma_x(r'')\geq 2\lambda(x)+r''$ for $r''\geq r'$. Set $r=\Gamma_x(r')$. Then the set $B=\{y\in F:\lambda(y)\leq r\}$ is finite. Suppose now that $(H,\sigma)$ is a finite normed group and $\phi:A\subseteq F\rightarrow H$ a partial monomorphism on some finite set $A$ containing $B$ which is isometric. Then we claim that $\Gamma_x$ is a MOC for $\phi(x)$ in $H$. Indeed, take some $y\in H$. If $\sigma(y)>r'$ then $\Gamma_x(\sigma(y))\geq 2\sigma(\phi(x))+\sigma(y)\geq \sigma(\phi(x)^{-1}\cdot y\cdot\phi(x))$. If $\sigma(y)\leq r'$ then $y=\phi(y')$ for some $y'\in B$ and $\lambda(y')=\sigma(y)$. Since $\lambda(x^{-1}\cdot y'\cdot x)\leq \Gamma_x(\lambda(y'))\leq r$ we have $x^{-1}\cdot y'\cdot x\in B$, thus $\sigma(\phi(x)^{-1}\cdot y\cdot \phi(x))=\lambda(x^{-1}\cdot y'\cdot x)\leq \Gamma_x(\lambda(y'))=\Gamma_x(\sigma(y))$.
\end{proof}

\begin{constr}\label{main_constr}
Let $\{(F_n,\nu_n):n\in\Nat\}$ be an enumeration of all finitely generated free groups with rational finitely generated norms, i.e. norms taking values in the rationals. We shall denote the generators of $F_i$ by $x_{i,1},\ldots,x_{i,n_i}$, for each $i$. For each $n\in\Nat$ we use Proposition \ref{prop1} to define a norm $\lambda_n$ on $G_n=F_1\ast\ldots\ast F_n$ which extends $\nu_i$ for $i\leq n$, and moreover, for each $i\leq n$, $j\leq n_i$ we have that $2\Gamma_i^j$ is a modulus of continuity of $x_{i,j}$ in $G_n$ (where $\Gamma_i^j$ was the minimal MOC for $x_i^j$ in $F_i$). Suppose that $\lambda_n$ is generated by some $\lambda'_n$ defined on a finite set $A_n\subseteq G_n$. Set $k_n=\max_{x\in A_n} |x|$ and let $B_n=\{x\in G_n: |x|\leq k_n\}$. We use Proposition \ref{prop2} to get a finite group $H_n$ with a norm $\rho_n$ such that there is a partial monomorphism $\phi_n:B_n\hookrightarrow H_n$ which is isometric with respect to $\lambda_n$ and $\rho_n$, and moreover, for every generator $x_{i,j}$, $i\leq n$, $j\leq n_i$, $\Gamma_{i,j}=2\Gamma_i^j$ is a MOC for $\phi_n(x_{i,j})$ in $H_n$.

Finally, consider any non-principal ultrafilter on $\Nat$ and set $\mathbb{G}$ to be the corresponding metric ultraproduct of the sequence $(H_n,\rho_n)_n$.\\
\end{constr}
\begin{thm}\label{final_thm}
$\mathbb{G}$ contains isometrically every separable normed topological group.
\end{thm}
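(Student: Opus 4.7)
The plan is to reduce to a countable dense subgroup of $(H,\rho)$ and embed it into $\mathbb{G}$ through the partial isometric inclusions $\phi_n$ built into Construction \ref{main_constr}. Fix a countable dense subgroup $D\le H$; since $\mathbb{G}$ is Ra\v ikov metrically complete by the lemma on completeness of ultraproducts in the previous section, it is enough to produce an isometric topological-group embedding $D\hookrightarrow\mathbb{G}$, for it will then extend to the Ra\v ikov completion of $D$, which contains $H$ (continuity of inversion in $H$ guarantees that every Cauchy sequence in $D$ converging to an element of $H$ has Cauchy inverses). By Observation \ref{obs1} applied to $D$, we fix a continuous seminorm $\lambda_\infty$ on $F_\infty=F(y_1,y_2,\dots)$ and a surjective homomorphism $q:F_\infty\to D$ with $\lambda_\infty(w)=\rho(q(w))$; write $h_j:=q(y_j)$ and $F^{(k)}\le F_\infty$ for the subgroup generated by $y_1,\dots,y_k$.

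Any continuous seminorm on a finitely generated free group is uniformly approximable on any prescribed finite set by rational finitely generated norms, and with a little care this approximation can also be chosen to approximate the minimal MOC of every generator on a prescribed interval near zero. Using this, for every $(k,\ell)\in\Nat^2$ pick an index $m(k,\ell)$ in the enumeration of Construction \ref{main_constr}, strictly increasing in $(k,\ell)$, such that $F_{m(k,\ell)}$ is canonically identified with $F^{(k)}$, $\nu_{m(k,\ell)}$ agrees with $\lambda_\infty$ to within $1/\ell$ on every word of word-length at most $\ell$, and the minimal MOC $\Gamma_j^{m(k,\ell)}$ of each $y_j$ in $F_{m(k,\ell)}$ satisfies $\Gamma_j^{m(k,\ell)}(r)\le\Gamma_{h_j}^H(r)+1/\ell$ for $r\in[0,\ell]$. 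For each $n$ choose $(k_n,\ell_n)$ with $k_n,\ell_n\to\infty$ and $m(k_n,\ell_n)\le n$, and let $\psi_n$ be the partial map on $F_{m(k_n,\ell_n)}$ with values in $H_n$ obtained by composing the norm-extending inclusion $F_{m(k_n,\ell_n)}\hookrightarrow G_n$ of Proposition \ref{prop1} with $\phi_n:B_n\to H_n$. For each $h\in D$, fix once and for all a word $w_h\in F^{(k_h)}$ with $q(w_h)=h$, and define $g_n^{(h)}:=\psi_n(w_h)$ whenever this is defined (all but finitely many $n$) and $g_n^{(h)}:=1_{H_n}$ otherwise.

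Three verifications finish the proof. First, $\rho_n(g_n^{(h)})=\nu_{m(k_n,\ell_n)}(w_h)\to\lambda_\infty(w_h)=\rho(h)$, so the assignment $h\mapsto[(g_n^{(h)})_n]$ is distance-preserving. Second, for $h=h'\cdot h''$ in $D$ and $n$ large enough that $k_n$ dominates $k_h,k_{h'},k_{h''}$ and $w_h^{-1}w_{h'}w_{h''}$ falls into $B_n$, the product $(g_n^{(h)})^{-1}g_n^{(h')}g_n^{(h'')}$ equals $\psi_n(w_h^{-1}w_{h'}w_{h''})$, whose $\rho_n$-norm equals $\nu_{m(k_n,\ell_n)}(w_h^{-1}w_{h'}w_{h''})$ and tends to $\lambda_\infty(w_h^{-1}w_{h'}w_{h''})=\rho(1_H)=0$; hence the assignment descends to a group homomorphism into $\mathbb{G}$. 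Third, and this is the main obstacle, each $(g_n^{(h)})_n$ must lie in $(H_n)_{\mathcal{C}}$: by Proposition \ref{prop1} combined with the \emph{moreover} clause of Proposition \ref{prop2}, $\psi_n(y_j)$ carries $2\Gamma_j^{m(k_n,\ell_n)}$ as a MOC in $H_n$, which on any fixed bounded interval is bounded by $2\Gamma_{h_j}^H+1/\ell_n$, a function tending to zero at zero uniformly along the ultrafilter; composing these bounds over the finitely many generators in $w_h$ produces a MOC for each $g_n^{(h)}$ in $H_n$, and the ultralimit of these MOCs witnesses continuity of $(g_n^{(h)})_n$ in the ultraproduct. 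Without this explicit factor-$2$ MOC preservation threaded through Propositions \ref{prop1} and \ref{prop2}, the representative sequences could fail to belong to $(H_n)_{\mathcal{C}}$ and the image in $\mathbb{G}$ could collapse to a trivial group, exactly as in the first pathological example of the previous section.
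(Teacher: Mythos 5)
Your proposal is correct and follows essentially the same route as the paper's proof: reduce to a countable dense subgroup, present it as a quotient of a free group with a continuous seminorm via Observation \ref{obs1}, approximate that seminorm by rational finitely generated norms so as to locate the approximants in the enumeration of Construction \ref{main_constr}, thread the moduli-of-continuity preservation of Propositions \ref{prop1} and \ref{prop2} through to keep the representative sequences in $(H_n)_{\mathcal{C}}$, and finish with the Ra\v ikov metric completeness of the ultraproduct. The only cosmetic difference is that you build the approximation of the minimal moduli into the choice of the rational norms, whereas the paper derives it afterwards from the pointwise convergence of the approximating norms to $\rho$; the arguments are otherwise identical.
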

\begin{remark}
Theorem \ref{final_thm} covers Theorem \ref{main2} from the introduction. Theorem \ref{main1}, resp. \ref{main1_restate} will follow by a rather standard argument which we shall provide after the proof of Theorem \ref{final_thm}.
\end{remark}
\begin{proof}
 Let $(E,\rho)$ be an arbitrary separable normed group. Let $(e_n)_n$ be an infinite set of generators such that the Ra\v ikov metric completion of the subgroup generated by $(e_n)_n$ contains $E$. By Observation \ref{obs1}, we may suppose that the subgroup generated by $(e_n)_n$ is free if we view $\rho$ as a seminorm. For any $x\in E$ by $\Gamma_x$ we shall denote $\Gamma_x^E$, i.e. the minimal modulus of continuity for $x$ in $E$.

For any $n$, let $E_n$ be the free group freely generated by $e_1,\ldots,e_n$. Let $C_n$ be the set $\{x\in E_n:|x|\leq n\}$.

We define a rational partial \underline{norm} (note that $\rho$ in contrast may be just a seminorm) $\sigma'_n$ on $C_n$. We take as $\sigma'_n$ any rational partial norm $\sigma'$ on $E_n$ with the property that for every $w\in C_n$ we have
\begin{multline}\label{ratnorm_eq}
\sigma'(w)\geq \rho(w),\\
\sigma(w)-\rho(w_h)\leq 1/m,
\end{multline}
 where $m=|C_n|$.
\begin{claim}
Such a rational partial norm $\sigma$ exists.
\end{claim}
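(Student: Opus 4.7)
The plan is to obtain $\sigma'_n$ by perturbing $\rho|_{C_n}$ very slightly upward, so that (a) the new values are rational, (b) the overshoot is at most $1/m$, and (c) enough uniform slack is introduced to preserve the subadditivity inequalities. The key observation is that $\rho$, restricted to $C_n$, already satisfies all the partial-norm inequalities (it is a seminorm); the only real issue is rounding to rationals without destroying subadditivity, and the only extra worry is that $\rho$ may be a seminorm, so $\rho(w)$ can vanish on some $w \neq 1$ where we must still force $\sigma'_n(w) > 0$.

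Concretely, fix a rational $\epsilon \in (0, 1/m]$. For each unordered pair $\{w, w^{-1}\}$ with $w \in C_n \setminus \{1\}$ pick a single rational number $q_w = q_{w^{-1}}$ in the common interval
\begin{equation*}
[\rho(w) + \epsilon/2,\; \rho(w) + \epsilon] \;=\; [\rho(w^{-1}) + \epsilon/2,\; \rho(w^{-1}) + \epsilon],
\end{equation*}
which is a nonempty real interval and hence contains a rational. Set $q_1 := 0$ and define $\sigma'_n(w) := q_w$. By construction $\sigma'_n$ is rational-valued, symmetric under inversion, vanishes exactly at $1$ (since $q_w \geq \epsilon/2 > 0$ even when $\rho(w) = 0$), and satisfies $\rho(w) \leq \sigma'_n(w) \leq \rho(w) + 1/m$ throughout $C_n$.

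It remains to check the partial-norm inequality. Let $w_1, \ldots, w_k \in C_n$ have $w_1 \cdots w_k \in C_n$; dropping any factor equal to $1$ (which contributes $0$ to both sides), we may assume $w_i \neq 1$ for every $i$. Then
\begin{equation*}
\sigma'_n(w_1 \cdots w_k) \;\leq\; \rho(w_1 \cdots w_k) + \epsilon \;\leq\; \sum_{i=1}^k \rho(w_i) + \epsilon,
\end{equation*}
while by the lower bound on each $q_{w_i}$,
\begin{equation*}
\sum_{i=1}^k \sigma'_n(w_i) \;\geq\; \sum_{i=1}^k \rho(w_i) + k\epsilon/2.
\end{equation*}
For $k \geq 2$ we have $\epsilon \leq k\epsilon/2$, giving the required inequality; for $k = 1$ it reduces to the tautology $\sigma'_n(w_1) \leq \sigma'_n(w_1)$.

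I don't anticipate a real obstacle: the $\epsilon/2$ buffer is chosen precisely so that the rounding error of size at most $\epsilon$ is absorbed by the $k$-fold accumulation $k\epsilon/2$ as soon as $k \geq 2$, and the symmetry condition $q_w = q_{w^{-1}}$ is enforced for free by making one rational choice per $\{w, w^{-1}\}$-orbit.
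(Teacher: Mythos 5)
Your construction is correct, and it verifies everything the claim needs: rationality, symmetry (one choice of $q_w$ per $\{w,w^{-1}\}$-orbit, which is legitimate since $\rho(w)=\rho(w^{-1})$), strict positivity off the identity even where the seminorm $\rho$ vanishes, the two-sided bound $\rho\leq\sigma'_n\leq\rho+1/m$, and subadditivity. The overall strategy is the same as the paper's --- perturb $\rho\upharpoonright C_n$ upward by a small amount and land on rationals, using that $\rho$ is already subadditive on $E_n$ --- but the mechanism by which subadditivity survives the perturbation is genuinely different. The paper enumerates $C_n$ with $\rho(c_1)\geq\dots\geq\rho(c_m)$, takes an \emph{increasing} sequence of perturbations $\delta_i$ all smaller than the minimal nonzero gap between distinct $\rho$-values, and argues by cases on whether some factor $c_{i_l}$ of a product has $\rho$-value at least $\rho(c_i)$ (in which case that single factor already dominates) or all factors have strictly smaller $\rho$-value (in which case all their $\delta$'s dominate $\delta_i$). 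Your argument replaces this ordering-and-gap bookkeeping by a uniform band $[\rho(w)+\epsilon/2,\rho(w)+\epsilon]$ and the elementary count $k\epsilon/2\geq\epsilon$ for $k\geq 2$, after discarding identity factors; this is shorter, avoids the quantity $C_{\min}$ entirely, and is insensitive to how the $\rho$-values are distributed. The only thing the paper's version buys that yours does not is that its perturbation preserves the relative order of the values $\rho(c_i)$ (equal values get equal bumps, distinct values stay distinct in the same order), but nothing downstream in the proof of Theorem 3.8 uses this, so your construction serves equally well there.
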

To show it enumerate $C_n$ as $c_1,\ldots,c_m$ in such a way that $\rho(c_1)\geq \rho(c_2)\geq\ldots\geq \rho(c_m)$. Let $C_{min}=\min\{1/m,\min\{|\rho(c_i)-\rho(c_j)|:i,j\leq m,\rho(c_i)\neq \rho(c_j)\}\}$. Let $(\delta_i)_{i=1}^{2m+1}$ be an increasing sequence of positive real numbers such that for each $i\leq 2m+1$
\begin{itemize}
\item $\delta_i<C_{min}$,
\item if for some $i\neq j$, $\rho(c_i)=\rho(c_j)$, then $\delta_i=\delta_j$,
\item $\rho(c_i)+\delta_i\in \Rat$.
\end{itemize}

Then for $c_i\neq 1$ we set $\sigma'(c_i)=\rho(c_i)+\delta_i$, and $\sigma'(1)=0$. Clearly, it is rational, and it is symmetric since if $c_i=c^{-1}_j$ then $\delta_i=\delta_j$, thus $\sigma'(c_i)=\rho(c_i)+\delta_i=\rho(c_j)+\delta_j=\sigma'(c_j)$. Let $c_i=c_{i_1}\cdot\ldots\cdot c_{i_j}$. If for any $l\leq j$ we have $i_l\geq i$, i.e. $\rho(c_i)\leq \rho(c_{i_l})$, then we have $$\sigma'(c_i)=\rho(c_i)+\delta_i\leq \rho(c_i)+(\rho(c_{i_l})-\rho(c_i))\leq$$  $$\rho(c_{i_l})+\delta_{i_l}=\sigma'(c_{i_l})\leq \sum_{l=1}^j \sigma'(c_{i_l}).$$ 
If for every $l\leq j$ we have $\rho(c_i)>\rho(c_{i_l})$, then for every $l\leq j$ we have $\delta_i\leq \delta_{i_l}$ and thus $$\sigma'(c_i)=\rho(c_i)+\delta_i\leq \sum_{l=1}^j \rho(c_{i_l})+\delta_{i_l}=\sum_{l=1}^j \sigma'(c_{i_l}).$$
This proves the claim.\\

We set $\sigma_n$ to be the (rational finitely generated) norm on $E_n$ generated by $\sigma'_n$.

For each $n$, there is $i(n)$ such that $(E_n,\sigma_n)$ is equal to $(F_{i(n)},\nu_{i(n)})$. It follows that we can find a strictly increasing sequence of natural numbers $i_1<i_2<\ldots$ such that for each $k\in\Nat$ and every $i_k\leq l<i_{k+1}$, $G_l$ contains $F_{i(k)}=E_k$ as a subgroup. Thus for every $n$, $m\geq n$ and $i_m\leq l<i_{m+1}$ fix some isometric monomorphism $\phi: (E_m,\sigma_m)\rightarrow G_l$ and denote by $e_n^l$ the element $\phi(e_n)\in G_l$. For $l<i_n$, set $e_n^l=1$. So we have defined elements $e_n^l$ for all $n,l\in\Nat$.\\

Now notice that by \eqref{ratnorm_eq}, for any $w\in E$ we have
\begin{equation}\label{ratnorm_eq2}
\rho(w)=\lim_n \sigma_n(w).
\end{equation}
For any $n\leq m\in \Nat$, let $\Gamma_{e_n}^m$ be the minimal MOC for $e_n$ in $(E_m,\sigma_m)$ and $\Gamma_{e_n}$ the minimal MOC for $e_n$ in $(E,\rho)$. It follows from \eqref{ratnorm_eq} and \eqref{ratnorm_eq2} that
\begin{equation}\label{MOC_eq}
\Gamma_{e_n}=\lim_m \Gamma_{e_n}^m,
\end{equation}
 i.e. for any $r$, $\Gamma_{e_n}(r)=\lim_m \Gamma_{e_n}^m(r)$. By Proposition \ref{prop1}, we have

\begin{equation}\label{MOC_eq2}
\Gamma_{e_n^l}^{G_l}\leq 2\Gamma_{e_n}^m\quad \forall n \; \forall m\geq n\; \forall i_m\leq l<i_{m+1}.
\end{equation}
Thus, if we denote by $f_n^m$ the element $\phi_m(e_n^m)$ in $H_m$, for all $n,m\in\Nat$, we still have that $\Gamma_{f_n^m}^{H_m}\leq 2\Gamma_{e_n}^m$ (by Proposition \ref{prop2}). Recall that $\phi_m$ is the partial isometric monomorphism from Construction \ref{main_constr} (where it was obtained using Proposition \ref{prop2}).

For each $n$ consider the sequence $(f_n^m)_m$. By \eqref{MOC_eq} and \eqref{MOC_eq2}, the elements $(f_n^m)_m$ are bounded by a common MOC, thus $(f_n^m)_m$ is continuous in the ultraproduct and belongs to $\mathbb{G}$. We shall denote the corresponding element there by $g_n$.

We now claim that $\langle g_n:n\in\Nat\rangle\leq \mathbb{G}$ is isometrically isomorphic to $\langle e_n:n\in\Nat\rangle\leq E$. For each $n\in\Nat$ and $w\in E_n$, denote by $w_{\mathbb{G}}$ the corresponding element in $\langle g_n:n\in\Nat\rangle$, i.e. an element obtained by a canonical evaluation where $e_n$ is evaluated as $g_n$. Similarly, for all $m\geq i_n$ denote by $w_m$ the evaluation of $w$ in $\langle f_i^m:i\leq n\rangle \leq H_m$. Then for any $n$ and $w\in E_n$ we have $$\lambda(w_{\mathbb{G}})=\lim_{\mathcal{U}} \lambda_m (w_m)=\lim_{m\to\infty} \lambda_m(w_m)=\rho(w).$$

Since $\mathbb{G}$ is Ra\v ikov metrically complete, it contains isometrically $E$.
\end{proof}
\begin{proof}[Proof of Theorem \ref{main1_restate}]
We claim that the sequence $(H_n,\rho_n)_n$ from Construction \ref{main_constr} is as desired. Fix some normed topological group $(G,\lambda)$, some finite subset $F\subseteq H$ and some $\varepsilon>0$. We may without loss of generality suppose that $G$ is separable; otherwise we could replace $G$ by some separable subgroup of $G$ containing $F$. Suppose, to reach a contradiction, that there is an infinite subset $A\subseteq \Nat$ such that for all $i\in A$ there is no $\varepsilon$-homomorphism from $F$ into $G_i$. Let $\mathcal{U}$ be an arbitrary non-principal ultrafilter on $\Nat$ such that $A\in\mathcal{U}$. By Theorem \ref{final_thm}, the metric ultraproduct of the sequence $(G_n)_n$ using $\mathcal{U}$ contains $G$ isometrically. Moreover, it follows from the proof of Theorem \ref{final_thm} that if we choose some generating sequence $(e_n)_n$ of $G$ so that it contains the elements of $F$, then we obtain an isometric embedding $\psi:G\rightarrow \mathbb{G}$, where $\mathbb{G}$ is the ultraproduct, such that for every $f\in F$, $$\Gamma_{\psi(f)}^\mathbb{G}\leq 2\Gamma_f^G.$$ As usual, we shall suppose that for each $g\in G$, $\psi(g)$ is a sequence from $\prod_n G_n$ rather than some equivalence class, and for each $i\in\Nat$, by $\psi(g)_i$ we denote the corresponding projection on the $i$-th coordinate.

Then by a standard ultraproduct argument (essentially by the classical \L o\' s theorem) there exists a set $B\in\mathcal{U}$ such that for all $i\in B$ the map $\phi_i:F\rightarrow G_i$ defined by $f\to \psi(f)_i$ is an $\varepsilon$-homomorphism such that moreover $\Gamma_{\phi_i(f)}^{G_i}\leq 2\Gamma_f^G+\varepsilon\mathrm{id}$. Taking any $i\in A\cap B$ leads to a contradiction.
\end{proof}
Let us conclude with few problems. First, we want to ask whether the analogous result holds in the category of groups with bi-invariant metric. Thus we want to ask whether not only every discrete group is weakly sofic, which is the problem of Glebsky and Rivera, but whether actually every group with bi-invariant metric is weakly sofic.
\begin{question}\label{ques1}
Does every group with bi-invariant metric isometrically embed into a metric ultraproduct of finite groups with bi-invariant metric?
\end{question}

Let us offer also a weakening of the previous question. As weakly sofic groups generalize sofic groups, one can generalize the notion of hyperlinear groups by defining \emph{weakly hyperlinear groups} as those groups that can be approximated by compact groups with bi-invariant metric, or equivalently, as those groups that embed as subgroups into metric ultraproducts of compact groups with bi-invariant metric. This notion was introduced by Jakub Gismatulin. Clearly, the notion of weakly hyperlinear groups makes again sense also for metric groups with bi-invariant metric.
\begin{question}
Is every group with bi-invariant metric weakly hyperlinear?
\end{question}
\begin{remark}
During the review process of the paper, Question \ref{ques1} was answered negatively. Nikolov, Schneider and Thom in \cite{NST} prove that no compact connected non-abelian Lie group with a compatible bi-invariant metric embeds into a metric ultraproduct of finite groups with bi-invariant metric.
\end{remark}

\end{document}